\theoremstyle{thmstyleone}%
\newtheorem{theorem}{Theorem}%  meant for continuous numbers
\newtheorem{proposition}[theorem]{Proposition}% 
\theoremstyle{thmstyletwo}%
\newtheorem{remark}{Remark}%
\theoremstyle{thmstylethree}%
\newtheorem{definition}{Definition}%
\newcommand{\dd}{\mathrm{d}}
\newcommand{\ex}{\mathrm{e}}
\newcommand{\vt}{\vartheta}
\newcommand{\trans}{^\mathsf{T}}
\newcommand{\transl}{\mathscr{V}}
\newcommand{\av}{\bm{a}}
\newcommand{\cv}{\bm{c}}
\newcommand{\dv}{\bm{d}}
\newcommand{\e}{\bm{e}}
\newcommand{\h}{\bm{h}}
\newcommand{\x}{\bm{x}}
\newcommand{\zero}{\bm{0}}
\newcommand{\y}{\bm{y}}
\newcommand{\normal}{\bm{\nu}}
\newcommand{\surface}{\mathscr{S}}
\newcommand{\tplane}{\mathscr{T}}
\newcommand{\nablas}{\nabla\!_\mathrm{s}}
\newcommand{\nablast}{\nabla^\ast\!\!\!_\mathrm{s}}
\newcommand{\laps}{\mathop{{}\bigtriangleup_\mathrm{s}\!}\nolimits}
\newcommand{\nablastwo}{\nabla^2\!\!\!\!_\mathrm{s}\,}
\newcommand{\uv}{\bm{u}}
\newcommand{\f}{\bm{f}}
\renewcommand{\f}{\bm{f}}
\newcommand{\rv}{\bm{r}}
\newcommand{\bv}{\bm{b}}
\newcommand{\vv}{\bm{v}}
\newcommand{\W}{\mathbf{W}}
\newcommand{\F}{\mathbf{F}}
\newcommand{\G}{\mathbf{G}}
\renewcommand{\H}{\mathbf{H}}
\newcommand{\I}{\mathbf{I}}
\newcommand{\R}{\mathbf{R}}
\newcommand{\Rd}{\R_\mathrm{d}}
\newcommand{\Rb}{\R_\mathrm{b}}
\newcommand{\Q}{\mathbf{Q}}
\newcommand{\C}{\mathbf{C}}
\newcommand{\B}{\mathbf{B}}
\newcommand{\U}{\mathbf{U}}
\newcommand{\V}{\mathbf{V}}
\newcommand{\curl}{\operatorname{curl}}
\newcommand{\curls}{\curl_\mathrm{s}}
\newcommand{\divs}{\diver_\mathrm{s}\!}
\newcommand{\tr}{\operatorname{tr}}
\newcommand{\skw}{\operatorname{skw}}
\newcommand{\curvature}{(\nablas\normal)}
\newcommand{\A}{\mathbf{A}}
\newcommand{\euclid}{\mathscr{E}}
\newcommand{\framec}{(\e_1,\e_2,\e_3)}
\newcommand{\framecyl}{(\e_r,\e_\vt,\e_z)}
\newcommand{\frameen}{(\e_1,\e_2,\normal)}
\newcommand{\frameprime}{(\e_1^\prime,\e_2^\prime,\normal)}
\newcommand{\framee}{(\e_u,\e_v,\normal)}
\newcommand{\frameu}{(\uv_1,\uv_2,\normal)}
\newcommand{\framev}{(\vv_1,\vv_2,\normal^\ast)}
\newcommand{\curve}{\bm{x}}
\newcommand{\orth}{\mathsf{SO}(3)}
\newcommand{\proj}{\mathbf{P}(\normal)}
\newcommand{\sphere}{\mathbb{S}^2}
\newcommand{\diver}{\operatorname{div}} %divergence
\newcommand{\rvu}{\bm{r}_u}
\newcommand{\rvv}{\bm{r}_v}
\newcommand{\wdr}{w_\mathrm{d}}
\newcommand{\wbe}{w_\mathrm{b}}
\newcommand{\wst}{w_\mathrm{s}}
\begin{document}

\title[]{Metric Implications in the Kinematics of Surfaces}

%%=============================================================%%
%% Prefix	-> \pfx{Dr}
%% GivenName	-> \fnm{Joergen W.}
%% Particle	-> \spfx{van der} -> surname prefix
%% FamilyName	-> \sur{Ploeg}
%% Suffix	-> \sfx{IV}
%% NatureName	-> \tanm{Poet Laureate} -> Title after name
%% Degrees	-> \dgr{MSc, PhD}
%% \author*[1,2]{\pfx{Dr} \fnm{Joergen W.} \spfx{van der} \sur{Ploeg} \sfx{IV} \tanm{Poet Laureate} 
%%                 \dgr{MSc, PhD}}\email{iauthor@gmail.com}
%%=============================================================%%

\author[1]{\fnm{Andr\'e M.} \sur{Sonnet}}\email{andre.sonnet@strath.ac.uk}

\author*[2]{\fnm{Epifanio G.} \sur{Virga}}\email{eg.virga@unipv.it}
\equalcont{These authors contributed equally to this work.}

\affil*[1]{\orgdiv{Department of Mathematics and Statistics}, \orgname{University of Strathclyde}, \orgaddress{\street{26 Richmond Street}, \city{Glasgow}, \postcode{G1 1XH}, \country{U.K}}}

\affil*[2]{\orgdiv{Dipartimento di Matematica}, \orgname{Universit\`a di Pavia}, \orgaddress{\street{Via Ferrata 5}, \city{Pavia}, \postcode{27100}, \country{Italy}}}

\abstract{In the direct approach to continua in reduced space dimensions, a thin shell is described as a mathematical surface in three-dimensional space. An exploratory kinematic study of such surfaces could be very valuable, especially if conducted with no use of coordinates. Three energy contents have been identified in a thin shell, which refer to three independent deformation modes: stretching, drilling, and bending. We analyze the consequences for the three energy contents produced by metric restrictions imposed on the admissible deformations. Would the latter stem from physical constraints, the elastic response of a shell could be hindered in ways that might not be readily expected.}

\keywords{Kinematics of surfaces, Soft shells, Thin shells, Deformation modes, Soft elasticity, Eversions.}

%%\pacs[JEL Classification]{D8, H51}

\pacs[MSC Classification]{74A05 74B20 74K25}

\maketitle

\section{Introduction}\label{sec:intro}
It has appropriately been remarked that the kinematics of material surfaces cannot be reduced to the standard treatment of surfaces made in classical differential geometry (see, for example, \cite{seguin:coordinate}). As pointed out in \cite{murdoch:direct} (see, in particular, the excerpt reproduced in \cite{seguin:coordinate}, and also \cite{murdoch:coordinate}), this does not merely amount to renouncing the use of coordinates (which are at the heart of most analyses, as shown in \cite{ciarlet:introduction}). It is rather a matter of prioritising physical essence over mathematical commodity.

This paper follows this line of thought. Motivated by a direct theory for soft thin shells proposed in \cite{sonnet:variational}, we explore the consequences of certain kinematic restrictions, mainly of a metric nature, on the energy contents, which we classify in three independent modes: stretching, drilling, and bending. Our development is intrinsic (coordinate-free) and generically tensorial; it employs the method of moving frames, phrased in the language of \emph{connectors}, surface tangential vector fields that play here the role played by differential forms in Cartan's method \cite{cartan:methode}.\footnote{\label{ftn:history} The method of moving frames has an interesting history, which predates Cartan's work. It can be traced back to Darboux (see pp.\,47-57 of \cite{darboux:lecons_I} and also \cite{ribaucour:memoire}), who extended to surfaces the classical Frenet-Serret formulas for curves. A first generalization of Darboux's method is due to Cotton~\cite{cotton:generalisation}; apparently, \cite{cartan:structure} is the first paper by Cartan on this topic (see also \cite{chern:moving}).}

In an attempt to make this paper self-contained and attract readers not yet conversant with the method of moving frames, we recall some general preliminaries in Sect.~\ref{sec:prelim} and give a special account on connectors in Sect.~\ref{sec:connectors}. Section~\ref{sec:kinematics} is devoted to the general kinematics of material surfaces, with special emphasis on an \emph{invariant} rotation gradient, a third-rank tensor with a special status in the energetics of softy shells. In Sects.~\ref{sec:conformal} and \ref{sec:isometric}, we consider two classes of metric restrictions on the deformation of a surface, presented in order of increasing severity. In particular, in Sect.~\ref{sec:isometric}, we analyze the implications for the independent energy contents of the requirement that the deformation be isometric. Finally, in Sect.~\ref{sec:conclusions}, we collect the main conclusions of our study. The paper is closed by four appendices with ancillary results and computational details.
%that could distract the reader's attention if placed in the main text.

\section{Preliminaries on surface calculus}\label{sec:prelim}
With the primary interest to make our development self-contained, we collect here a few mathematical properties of surfaces, phrased mainly in the language adopted in the method of \emph{moving frames}, although our method differs formally from the latter in that it uses vector fields instead of differential forms.\footnote{Our method is inspired by the work of Weatherburn \cite{weatherburn:differential_1,weatherburn:differential_2}, whose essential features are also succinctly outlined in \cite{sonnet:bending-neutral}. His work was indeed preceded by the introduction of a general vector method in differential geometry by the Italian school that originated from Levi-Civita (see \cite{burali-forti:fondamenti,burgatti:teoremi,burgatti:memorie,burgatti:analisi} for the relevant contributions, which  Weatherburn seems to have been unaware of.)}

Our approach to surface calculus will be \emph{absolute}, that is, it will avoid, insofar as possible, explicit resort to coordinates, patches, and atlases of local maps.

Let $\surface$ be a (locally) smooth (say, of class $C^3$), orientable surface imbedded in three-dimensional space $\euclid$.\footnote{In local coordinates $(u,v)$ ranging in a domain $\Omega\subset\mathbb{R}^2$, $\surface$ can be split into \emph{patches}, defined as applications of $\Omega$ into $\euclid$ that have the requested degree of regularity, are one-to-one, and have continuous inverse \cite[see, for example,][p.\,130]{o'neill:elementary}.} We denote by $\normal$ the unit normal on $\surface$ in our chosen orientation.

A major role is played below by the notion of \emph{surface gradient} $\nablas$, which we introduce with the aid of smooth curves $\curve(t)$ on $\surface$. A scalar field $\varphi:\surface\to\mathbb{R}$ is differentiable on $\surface$ whenever we can write
\begin{equation}
	\label{eq:differentiability_scalar}
	\frac{\dd}{\dd t}\varphi(\curve(t))=\nablas\varphi\cdot\dot{\curve},
\end{equation}
where
the vector $\nablas\varphi$, which is perpendicular to the normal $\normal$, is the surface gradient of $\varphi$,
a superimposed dot $\dot{\null}$ denotes differentiation with respect to the parameter $t$, and $\dot{\curve}$ is a  vector along the tangent to $\curve(t)$. Similarly, for a vector field $\vv:\surface\to\transl$, where $\transl$ is the translation space associated with $\euclid$,\footnote{Our notation for $\euclid$ and $\transl$ is the same as in \cite[p.\,324]{truesdell:first}, where these geometric structures are further illuminated, especially in connection with their role in formulating  modern continuum mechanics.}
\begin{equation}
	\label{eq:differentiation_vector}
	\frac{\dd}{\dd t}\vv(\curve(t))=(\nablas\vv)\dot{\curve},
\end{equation}
where the second-rank tensor $\nablas\vv$, which annihilates the normal $\normal$, is the surface gradient of $\vv$. In particular, $\nablas\normal$ is the \emph{curvature tensor} of $\surface$, assumed to be at least continuous over $\surface$: it is a symmetric tensor field, whose eigenvalues are the \emph{principal} curvatures of $\surface$.\footnote{Often the curvature tensor is replaced by its opposite, the \emph{shape tensor} $\mathbf{S}=-\nablas\normal$, which clearly encodes the same information. Our choice of sign, which differs from the one customary in differential geometry, is justified by the desire (shared by others \cite{sanders:nonlinear,budiansky:notes}) of designating as $1/R$  the principal curvatures of a sphere of radius $R$. Two other mathematical constructs with essentially the same meaning appear in the literature; these are the \emph{second fundamental form} and the \emph{Weingarten map}, none of which will be used here (see also \cite[p.\,150]{needham:visual} for a similar neglect).}

The surface curl of $\vv$, $\curls\vv$, is defined by the identity
\begin{equation}
	\label{eq:surface_curl}
	2\skw(\nablas\vv)\uv=[\nablas\vv-(\nablas\vv)\trans]\uv=:\curls\vv\times\uv\quad\forall\ \uv\in\transl,
\end{equation}
where $\skw$ denotes the skew-symmetric part of a second-rank tensor, a superscript $\trans$ the transposition of a tensor, and $\times$ the vector product in $\transl$. Equation \eqref{eq:surface_curl} simply says that $\curls\vv$ is the axial vector associated with $2\skw(\nablas\vv)$. Likewise, the surface divergence of $\vv$ is defined as
\begin{equation}
	\label{eq:surface_divergence}
	\divs\vv:=\tr(\nablas\vv),
\end{equation}
where $\tr$ denotes the trace of a second-rank tensor.

As also recalled in \cite{sonnet:bending-neutral}, if $\f$ is a \emph{tangential} vector field, that is, such that $\f\cdot\normal\equiv0$, there exists a scalar field $\varphi$ on $\surface$ such that $\f=\nablas\varphi$, if and only if
\begin{equation}
	\label{eq:integrability_scalar}
	\skw(\nablas\f)=\skw(\curvature\f\otimes\normal).
\end{equation}
Similarly, letting a second-rank tensor field $\F$ be defined on $\surface$ so that $\F\normal\equiv\zero$, there exists a vector field $\vv$ on $\surface$ such that $\F=\nablas\vv$, if and only if
\begin{equation}
	\label{eq:integrability_vector}
	\skw(\nablas\F)=\skw(\F\curvature\otimes\normal),
\end{equation}
where $\nablas\F$ is a third-rank tensor and  skw acts as follows on its generic \emph{triadic} component $\av_1\otimes\av_2\otimes\av_3$,
\begin{equation}
	\label{eq:skw_third_rank_tensor}
	2\skw(\av_1\otimes\av_2\otimes\av_3):=\av_1\otimes\av_2\otimes\av_3-\av_1\otimes\av_3\otimes\av_2.
\end{equation}

It is sometimes convenient to represent a surface $\surface$ by use of coordinates $(u,v)$ as the image of a mapping $\rv:\Omega\to\euclid$, where $\Omega$ is a domain in $\mathbb{R}^2$. For a smooth surface (at least of class $C^2$), coordinates $(u,v)$ can be chosen so as to be, at least locally, \emph{isothermal}, that is, such that 
\begin{equation}
	\label{eq:isothermal_coordinates}
	\rvu\cdot\rvv=0\quad\text{and}\quad|\rvu|=|\rvv|,
\end{equation}
where $\rvu:=\partial_u\rv$ and $\rvv:=\partial_v\rv$. When only the first equation in \eqref{eq:isothermal_coordinates} is satisfied, but not the second, the coordinates are said to be \emph{orthogonal}.\footnote{A classical representation of the Gaussian curvature $K$ in general orthogonal coordinates will be proved in Appendix~\ref{sec:metric} in the vector formalism adopted in this paper.} Letting 
\begin{equation}
	\label{eq:e_u_e_v_definition}
	\e_u:=\frac{\rvu}{|\rvu|}\quad\text{and}\quad\e_v:=\frac{\rvv}{|\rvv|},
\end{equation}
we orient $\surface$ so that $\normal=\e_u\times\e_v$.
%The frame $\framee$ is another admissible moving frame; it is subject to \eqref{eq:gliding_laws} with  connectors denoted as $(\cv,\dv_u,\dv_v)$.

\section{Connectors}\label{sec:connectors}
We shall employ the method of \emph{moving frames}.\footnote{This method is systematically presented by  Cartan in \cite{cartan:methode} and is extensively used in the book by O'Neill~\cite{o'neill:elementary} to illustrate the differential geometry of spaces, surfaces, and curves. A more recent, rather comprehensive account, far more formal than needed here, is given in \cite{clelland:from}; a more elementary presentation can be found in \cite{ivey:cartan} (see also footnote~\ref{ftn:history} for earlier precursors).} An orthonormal frame $\frameen$, where $\normal=\e_1\times\e_2$,\footnote{We shall only consider \emph{positively} oriented, orthonormal frames with one unit vector coincident with $\normal$.} glides over $\surface$ according to the laws
\begin{equation}\label{eq:gliding_laws}
	\begin{cases}
		\nablas\e_1=\e_2\otimes\cv+\normal\otimes\dv_1,\\
		\nablas\e_2=-\e_1\otimes\cv+\normal\otimes\dv_2,\\
		\nablas\normal=-\e_1\otimes\dv_1-\e_2\otimes\dv_2,
	\end{cases}
\end{equation}
where the vector fields $(\cv,\dv_1,\dv_2)$ are everywhere tangent to $\surface$; these are the \emph{connectors} of the moving frame. More precisely, $\cv$ is the \emph{spin} connector and $\dv_1$, $\dv_2$, which need not be orthogonal to one another, are the \emph{curvature} connectors.\footnote{We call them connectors because they connect the frame at one point to the frame in a nearby point.} Since the curvature tensor $\nablas\normal$ is symmetric, the curvature connectors must obey the identity
\begin{equation}
	\label{eq:connector_identity}
	\dv_1\cdot\e_2=\dv_2\cdot\e_1.
\end{equation}
In particular, the third equation in \eqref{eq:gliding_laws} implies that
\begin{subequations}\label{eq:mean_and_gaussian}
	\begin{align}
		2H:=&\tr\curvature=-(\dv_1\cdot\e_1+\dv_2\cdot\e_2)\label{eq:mean_curvature},\\
		K:=&\det\curvature=\dv_1\times\dv_2\cdot\normal,\label{eq:gaussian_curvature}
	\end{align}
\end{subequations}
where $H$ and $K$ are the \emph{mean} and \emph{Gaussian} curvatures of $\surface$, respectively.\footnote{By $\tr\curvature$ and $\det\curvature$, we mean the sum and the product of the principal curvatures of $\surface$, respectively.}
Below, we shall apply \eqref{eq:gliding_laws} to a variety of different moving frames, each entailing its own set of connectors.

We now elaborate further upon the integrability condition \eqref{eq:integrability_vector} by applying it to the tensor fields $\nablas\e_1$ and $\nablas\e_2$. We first let $\F=\nablas\e_1$. Repeated use of \eqref{eq:gliding_laws} easily shows that
\begin{equation}
	\label{eq:nabla_2_e_1}
	\nablastwo\e_1=-\e_1\otimes\cv\otimes\cv+\normal\otimes\cv\otimes\dv_2+\e_2\otimes\nablas\cv-\e_1\otimes\dv_1\otimes\dv_1+\normal\otimes\nablas\dv_1
\end{equation}
and
\begin{equation}
	\label{eq:nabla_2_e_1_associate}
	(\nablas\e_1)\curvature\otimes\normal=-c_1\e_2\otimes\dv_1\otimes\normal-c_2\e_2\otimes\dv_2\normal-d_{11}\normal\otimes\dv_1\otimes\normal-d_{12}\normal\otimes\dv_2\otimes\normal.
\end{equation}
Here and below, we set 
\begin{equation}
	\label{eq:c_d_1_components}
	\cv=c_1\e_1+c_2\e_2,\ \dv_1=d_{11}\e_1+d_{12}\e_2,\ \dv_2=d_{21}\e_1+d_{22}\e_2,\quad \text{with}\quad  d_{12}=d_{21}.
\end{equation}

By applying \eqref{eq:integrability_vector} to the pair of equations \eqref{eq:nabla_2_e_1}, \eqref{eq:nabla_2_e_1_associate} and their analogs for $\F=\nablas\e_2$, we obtain formulas that can be written in  the following general form,
\begin{equation}
	\label{eq:integrability_vetor_representation}
	\skw\nablas\F-\skw(\F\curvature\otimes\normal)=\e_1\otimes\W_1+\e_2\otimes\W_2+\normal\otimes\W_3,
\end{equation}
where $\W_1$, $\W_2$, and $\W_3$ are skew second-rank tensors that must all vanish for \eqref{eq:integrability_vector} to hold, since $\frameen$ is a frame.
Once written for the axial vectors associated with these skew-symmetric tensors, the corresponding identities become\footnote{No further information could be obtained by requiring the integrability of $\nablas\normal$, as this would follow from \eqref{eq:gauss_mainardi} and the fact that $\normal=\e_1\times\e_2$.}
\begin{equation}
	\label{eq:integrability_identities}
	\begin{cases}
		\curls\cv+\dv_1\times\dv_2=c_1\dv_1\times\normal+c_2\dv_2\times\normal,\\
		\curls\dv_1-\cv\times\dv_2=d_{11}\dv_1\times\normal+d_{12}\dv_2\times\normal,\\
		\curls\dv_2+\cv\times\dv_1=d_{21}\dv_1\times\normal+d_{22}\dv_2\times\normal,
	\end{cases}
\end{equation}
which link all connectors together. A telling consequence of these equations follows from projecting along $\normal$ both their sides: also by use of \eqref{eq:gaussian_curvature}, we arrive at 
\begin{subequations}\label{eq:gauss_mainardi}
	\begin{align}
		\curls\cv\cdot\normal&=-K,\label{eq:gauss_equation}\\
		\curls\dv_1\cdot\normal&=\cv\times\dv_2\cdot\normal,\label{eq:mainardi_equation_1}\\
		\curls\dv_2\cdot\normal&=-\cv\times\dv_1\cdot\normal.	\label{eq:mainardi_equation_2}
	\end{align}
\end{subequations}

Equations \eqref{eq:gauss_mainardi} are fundamental in the differential geometry of surfaces in three-dimensional space. They correspond to the equations that in Cartan's language of differential forms  are derived in \cite{o'neill:elementary} (see claims (3) and (4) of Theorem~1.7, p.\,267) or \cite[p.\,448]{needham:visual}. Both \cite{o'neill:elementary} and \cite{needham:visual} call \eqref{eq:gauss_equation} the Gauss equation; \cite{o'neill:elementary} calls \eqref{eq:mainardi_equation_1} and \eqref{eq:mainardi_equation_2} the Codazzi equations, whereas \cite{needham:visual}, in an attempt to establish proper priority, calls them the Peterson-Mainardi-Codazzi equations.\footnote{The order of names would reflect the chronology of the discovery. Peterson apparently obtained these equations in his 1853 Thesis (originally written in Latvian), which was translated to Russian and published in 1952 \cite{phillips:karl}. In 1856, Mainardi found the same equations, which were then rediscovered independently by Codazzi in 1860  \cite[see also][p.\,448]{needham:visual}.}

Equation \eqref{eq:gauss_equation}, a further classical consequence of which is illustrated in Appendix~\ref{sec:metric}, establishes a relation between the spin connector $\cv$ of any moving frame and the Gaussian curvature $K$ of $\surface$, a quantity pertaining to the \emph{intrinsic} geometry of the surface.\footnote{As effectively recalled in \cite[p.\,11]{needham:visual},
``[I]ntrinsic geometry of a surface [is] a fundamentally new view of geometry, introduced by Gauss~\cite{gauss:general}. It means the geometry that is knowable to tiny, ant-like, intelligent (but 2-dimensional) creatures living \emph{within} the surface.''}
Equations \eqref{eq:mainardi_equation_1} and \eqref{eq:mainardi_equation_2} relate the curvature connectors of a moving frame to the spin connector of the same frame.

Connectors transform in a simple way under a change of moving frame. Consider a frame $\frameprime$ related to $\frameen$ through the equations 
\begin{equation}
	\label{eq:starred_frame}
	\e_1^\prime=\cos\alpha\e_1+\sin\alpha\e_2\quad\e_2^\prime=-\sin\alpha\e_1+\cos\alpha\e_2,
\end{equation}
where $\alpha$ is a scalar function of class $C^2$ on $\surface$. The frame $\frameprime$ obeys the same gliding laws \eqref{eq:gliding_laws}, but with primed connectors $(\cv^\prime,\dv_1^\prime,\dv_2^\prime)$. To relate these to $(\cv,\dv_1,\dv_2)$, we differentiate both sides of \eqref{eq:starred_frame} and make use of the resulting equations in the primed version of \eqref{eq:gliding_laws}. By identification of terms, we conclude that $\cv$ is shifted into
\begin{equation*}
	\label{eq:c_star_shift}
	\cv^\prime=\cv+\nablas\alpha,
\end{equation*}
while $\dv_1$ and $\dv_2$ transform like $\e_1$ and $\e_2$ in \eqref{eq:starred_frame},
\begin{equation}
	\label{eq:starred_connectors}
	\dv_1^\prime=\cos\alpha\dv_1+\sin\alpha\dv_2,\quad\dv_2^\prime=-\sin\alpha\dv_1+\cos\alpha\dv_2.
\end{equation}
Since, by \eqref{eq:integrability_scalar}, $\curls(\nablas\alpha)\cdot\normal\equiv0$, we immediately see that $\cv^\prime$ satisfies \eqref{eq:gauss_equation} like $\cv$. It is also easy to show from \eqref{eq:starred_connectors} that $\dv_1^\prime$ and $\dv_2^\prime$ obey the symmetry relation \eqref{eq:connector_identity} whenever $\dv_1$ and $\dv_2$ do, irrespective of the choice of the rotation angle $\alpha$. Moreover, $|\dv_1^\prime|^2+|\dv_2^\prime|^2=|\dv_1|^2+|\dv_2|^2$, as it should, since $\tr\curvature^2$ is the same for both frames.

A special choice of moving frame gives \eqref{eq:mainardi_equation_1} and \eqref{eq:mainardi_equation_2} another, possibly more transparent form. Let $\frameen$ be chosen as the eigenframe of the curvature tensor $\nablas\normal$, so that
\begin{equation}
	\label{eq:curvature_diagonal}
	\nablas\normal=\kappa_1\e_1\otimes\e_1+\kappa_2\e_2\otimes\e_2,
\end{equation}
where $\kappa_1$ and $\kappa_2$ are the principal curvatures of $\surface$. By comparing \eqref{eq:curvature_diagonal} and \eqref{eq:gliding_laws}, we see that in this frame
\begin{equation}
	\label{eq:d_connectors_diagonal}
	\dv_1=-\kappa_1\e_1\quad\text{and}\quad\dv_2=-\kappa_2\e_2.
\end{equation}
It follows by direct computation from the first equation in \eqref{eq:d_connectors_diagonal} that
\begin{align}
	\label{eq:curl_d_1_diagonal}
	\curls\dv_1\cdot\normal&=-\kappa_1\curls\e_1\cdot\normal+\e_2\cdot\nablas\kappa_1\nonumber\\
	&=-\kappa_1\e_1\cdot\cv+\e_2\cdot\nablas\kappa_1,
\end{align}
where use has also been made of the first equation in \eqref{eq:gliding_laws}.\footnote{A general identity for the mixed product, $\av\times\bv\cdot\cv=\bv\times\cv\cdot\av=\cv\times\av\cdot\bv$, valid for any triple of vectors $(\av,\bv,\cv)$, is tacitly implied in \eqref{eq:curl_d_1_diagonal} and elsewhere below.} By inserting in \eqref{eq:mainardi_equation_1} the second equation in \eqref{eq:d_connectors_diagonal} we arrive at
\begin{subequations}
	\label{eq:mainardi_special}
	\begin{equation}
		\label{eq:mainardi_special_1}
		\e_2\cdot\nablas\kappa_1=(\kappa_1-\kappa_2)(\e_1\cdot\cv).
	\end{equation}
	Similarly, we give \eqref{eq:mainardi_equation_2} the form\footnote{See also \cite[\S\,38.6, especially p.\,450]{needham:visual} for a proof of \eqref{eq:mainardi_special} phrased in the (equivalent) language of differential forms.}
	\begin{equation}
		\label{eq:mainardi_special_2}
		\e_1\cdot\nablas\kappa_2=(\kappa_1-\kappa_2)(\e_2\cdot\cv).
	\end{equation}
\end{subequations}

\section{Kinematics of surfaces}\label{sec:kinematics}
In an attempt to build a direct theory for the elasticity of \emph{soft} shells,\footnote{These are the ones expected to be more prone than others to in-plane twisting, possibly relevant in biophysical applications.} we have introduced \emph{drilling} and \emph{bending contents} in the kinematics of material surfaces \cite{sonnet:bending-neutral}. Here, we recall these definitions along with the corresponding invariant measures (of drilling and bending) that were attributed in \cite{sonnet:variational} to separate deformation modes.

To this end, we first collect a number of general results about the kinematics of surfaces. Consider a deformation $\y:\surface\to\euclid$ of a smooth surface $\surface$, which maps $\surface$ into $\surface^\ast=\y(\surface)$. Let $\y$ be twice continuously differentiable. By the polar decomposition theorem for the deformations of surfaces \cite{man:coordinate}, we can write
\begin{equation}
	\label{eq:deformation_gradient}
	\nablas\y=\R\U=\V\R,
\end{equation}
where, for every $\x\in\surface$, $\R(\x)$ is a rotation of the special orthogonal group $\orth$ in three-dimensional translation space $\transl$, $\U(\x)$ is a symmetric second-rank tensor on the tangent plane $\tplane_{\x}$ to $\surface$ at $\x$, whereas $\V(\y(\x))$ is a symmetric second-rank tensor on the tangent plane $\tplane^\ast_{\y(\x)}$ to $\surface^\ast$ at $\y(\x)$.

 We shall also refer to $\R$ as the polar rotation of $\nablas\y$; its determinant is $+1$ because the deformation $\y$ is required to preserve the \emph{orientation} of $\surface$. The tensors $\U$ and $\V$ are the \emph{stretching} tensors; they are both positive definite and can be represented as 
\begin{equation}
	\label{eq:U_V}
	\U=\lambda_1\uv_1\otimes\uv_1+\lambda_2\uv_2\otimes\uv_2\quad\text{and}\quad \V=\lambda_1\vv_1\otimes\vv_1+\lambda_2\vv_2\otimes\vv_2,
\end{equation}
where $\lambda_1,\lambda_2>0$ are the \emph{principal stretches} and the pairs $(\uv_1,\uv_2)$ and $(\vv_1,\vv_2)$ the corresponding \emph{principal directions} of stretching on $\tplane_{\x}$ and $\tplane^\ast_{\y(\x)}$, respectively. It readily follows from \eqref{eq:deformation_gradient} that
\begin{equation}
	\label{eq:V_v}
	\V=\R\U\R\trans\quad\text{and}\quad\vv_i=\R\uv_i\quad\text{for}\quad i=1,2.
\end{equation}
A useful representation of $\R$ is thus given by
\begin{equation}
	\label{eq:R_representation}
	\R=\vv_1\otimes\uv_1+\vv_2\otimes\uv_2+\normal^\ast\otimes\normal,
\end{equation}
where $\normal^\ast$ is the unit normal to $\surface^\ast$ oriented coherently with $\normal$.\footnote{That is, so that the frames $\frameu$ and $\framev$ are equally oriented, $\uv_1\times\uv_2\cdot\normal=\vv_1\times\vv_2\cdot\normal^\ast$.}

The \emph{right} and \emph{left} Cauchy-Green tensors embodying the metric properties of $\surface^\ast$ compared with $\surface$ (and of $\surface$ compared with $\surface^\ast$), acting on $\tplane_{\x}$ and $\tplane^\ast_{\y(\x)}$, respectively, are defined as
\begin{equation}
	\label{eq:C_B_tensors}
	\C:=(\nablas\y)\trans(\nablas\y)\quad\text{and}\quad\B:=(\nablas\y)(\nablas\y)\trans.
\end{equation}
Both $\C$ and $\B$ are  symmetric, positive definite tensors.

As discussed in \cite{sonnet:bending-neutral}, by applying Rodrigues' formula for representing rotations \cite{rodrigues:lois}, $\R$ can be uniquely decomposed into a \emph{drilling} rotation $\Rd$ and a \emph{bending} rotation $\Rb$,
\begin{equation}
	\label{eq:R_decomposition}
	\R=\Rb\Rd\quad\text{with}\quad\Rd\in\mathsf{SO}(\normal),\ \Rb\in\mathsf{SO}(\e),\ \e\cdot\normal=0,
\end{equation}
where $\mathsf{SO}(\e)$ is the subgroup of $\orth$ of all rotations about the axis designated by the unit vector $\e$.\footnote{Formally, $\mathsf{SO}(\e):=\{\R\in\orth:\R\e=\e\}$.} Thus, $\Rd$ is a rotation about the normal $\normal$ to $\surface$, while $\Rb$ is a rotation about an axis on the tangent plane (uniquely determined by $\R$). The vectors $\dv$ and $\bv$ that represent $\Rd$ and $\Rb$ via the Rodrigues formula, the former parallel to $\normal$ and the latter orthogonal to it, are recorded for completeness in Appendix~\ref{sec:contents}.

We say that in a given frame a deformation is of \emph{pure drilling} or of \emph{pure bending} if $\Rb=\I$ or $\Rd=\I$, respectively. Alternatively, we call the former \emph{bending-neutral} and the latter \emph{drilling-neutral}, as they have the property of leaving the bending or drilling content unaltered when composed with a pre-existing deformation. Both bending-neutral and drilling-neutral deformations of \emph{minimal surfaces}\footnote{These are surfaces with zero mean curvature $H$.} have been investigated in \cite{sonnet:neutral}.

Here we are interested in seeing what implications some metric restrictions have on these classes of deformations.

In a general deformation $\y$ of $\surface$, we can identify three  independent modes: they are \emph{stretching}, \emph{drilling}, and \emph{bending}, to which there correspond three \emph{pure} deformation measures, which we denote $\wst$, $\wdr$, and $\wbe$, respectively.\footnote{A pure measure of deformation is an invariant measure that is selectively activated when there is a frame where  the polar rotation $\R$ of $\nablas\y$ is either $\I$, $\Rd$, or $\Rb$, see \cite{sonnet:variational}.} For $\wst$ we take 
\begin{equation}
	\label{eq:w_s}
	\wst:=|\U-\proj|^2=|\V-\mathbf{P}(\normal^\ast)|^2,
\end{equation}
which measures the metric mismatch between $\surface$ and $\surface^\ast$.\footnote{The energy $\wst$ as delivered by \eqref{eq:w_s} is quadratic in the principal stretches; other forms have been proposed for $\wst$, such as Koiter's \cite{koiter:consistent}, which is \emph{quartic} in the principal stretches. Here, $\wst$ will play only a small role, as we are mainly interested in isomteric deformations of $\surface$.} Both $\wdr$ and $\wbe$ were defined in \cite{sonnet:variational} by taking invariant averages of the scalar contents $d^2$ and $b^2$, respectively, which delivered\footnote{It is perhaps worth noting that $\wdr$ is quadratic in $\H$, whereas $\wbe$ is quartic.}
\begin{equation}
	\label{eq:w_d_w_b}
	\wdr:=|\W(\normal)\circ\H|^2\quad\text{and}\quad\wbe:=\left(|\H|^2-\frac12|\W(\normal)\circ\H|^2-4\normal\cdot\H\circ\nablas\normal\right)^2.
\end{equation}
Here $\H$ is the third-rank tensor defined as\footnote{The tensor $\H$ is somewhat reminiscent of the third-rank tensor $\G:=\F\trans\nablas\F$, where $\F=\nablas\y$. The tensor $\G$ was introduced  by Murdoch~\cite{murdoch:direct}  in  his direct second-grade hyperelastic theory of shells and  has recently received a full kinematic characterization in \cite{tiwari:characterization}.} 
\begin{equation}
	\label{eq:H_definition}
	\H:=\R\trans\nablas\R
\end{equation}
and two vector-valued products by a third- and a second-rank tensor have been introduced, both denoted by $\circ$ and distinguished only by the order of multiplication, that in a given basis $\framec$ are represented in component form as\footnote{Here and in the following, the standard convention of summing over repeated indices is adopted.}
\begin{equation}
\label{eq:multipication_rules}
\A\circ\H=A_{ij}H_{ijk}\e_k\quad\text{and}\quad\H\circ\A=H_{ijk}A_{jk}\e_i.
\end{equation}
For $\A$ and $\H$ in the special diadic and triadic forms $\A=\av_1\otimes\av_2$ and $\H=\bv_1\otimes\bv_2\otimes\bv_3$,\footnote{Incidentally, for such an $\H$, $|\H|^2=b_1^2b_2^2b_3^2$.}
\begin{equation}
	\label{eq:multiplication_rules_examples}
	\A\circ\H=(\av_1\cdot\bv_1)(\av_2\cdot\bv_2)\bv_3\quad\text{and}\quad\H\circ\A=(\av_1\cdot\bv_2)(\av_2\cdot\bv_3)\bv_1.
\end{equation}

The symbol $\circ$ will also be employed with yet a different meaning in the generation of a second-rank tensor as product of a third-rank tensor, say $\H$, by a vector $\av$,
\begin{equation}
\label{eq:H_a}
\H\circ\av:=H_{ikj}a_k\e_i\otimes\e_j.
\end{equation}
No confusion should arise from our use of $\circ$ in \eqref{eq:H_a}, as this involves a vector $\av$, whereas a second-rank tensor $\A$ is involved in \eqref{eq:multipication_rules}.

\subsection{Invariant rotation gradient}\label{sec:H_tensor}
The tensor $\H$ will play a central role in our development. In component form, it reads explicitly as
\begin{equation}
	\label{eq:H_components}
	\H=R_{hi}R_{hj;k}\e_i\otimes\e_j\otimes\e_k,
\end{equation} 
where a semicolon denotes surface differentiation.

We shall call $\H$ the \emph{invariant rotation gradient}, for the reason that we now explain. A change of frame is represented by a rotation $\Q\in\orth$, uniform in space and possibly depending on time \cite[p.\,325]{truesdell:first}. The polar rotation $\R$ of a deformation gradient is transformed in $\R^\ast=\Q\R$ by a change of frame, and so it is neither \emph{frame-indifferent} nor \emph{frame-invariant}.\footnote{A frame-indifferent second-rank  tensor $\G$ would transform as $\G^\ast=\Q\G\Q\trans$, while a frame-invariant one as $\G^\ast=\G$ (see \cite[p.\,150]{gurtin:mechanics}).}

Since $\R\in\orth$, and so $\R\trans\R=\I$, \eqref{eq:H_components} readily entails that  the invariant rotation gradient can be represented as follows,
\begin{equation}
	\label{eq:H_representation}
	\H=\W(\uv_1)\otimes\av_1+\W(\uv_2)\otimes\av_2+\W(\normal)\otimes\av_3,
\end{equation}
in terms of three \emph{tangential} vectors $\av_1$, $\av_2$, and $\av_3$, see also \cite{sonnet:variational}. It should be noted that these vectors depend on the choice of the moving frame on $\surface$; equation \eqref{eq:H_representation} applies to the frame $\frameu$ of principal directions of stretching.

Below we collect a number of properties of $\H$, which shall be useful in the rest of the paper. The gliding laws in \eqref{eq:gliding_laws} can be written as follows for both frames $\frameu$ and $\framev$ on $\surface$ and $\surface^\ast$, respectively,
\begin{equation}\label{eq:connectors}
	\begin{cases}
		\nablas\uv_1=\uv_2\otimes\cv+\normal\otimes\dv_1,\\
		\nablas\uv_2=-\uv_1\otimes\cv+\normal\otimes\dv_2,\\
		\nablas\normal=-\uv_1\otimes\dv_1-\uv_2\otimes\dv_2,
	\end{cases}
	\qquad
	\begin{cases}
		\nablast\vv_1=\vv_2\otimes\cv^\ast+\normal^\ast\otimes\dv_1^\ast,\\
		\nablast\vv_2=-\vv_1\otimes\cv^\ast+\normal^\ast\otimes\dv_2^\ast,\\
		\nablast\normal^\ast=-\vv_1\otimes\dv_1^\ast-\vv_2\otimes\dv_2^\ast,
	\end{cases}
\end{equation}
where the connectors $(\cv,\dv_1,\dv_2)$ are associated with the frame $\frameu$, while $(\cv^\ast,\dv_1^\ast,\dv_2^\ast)$ are associated with $\framev$, and $\nablast$ denotes the surface gradient on $\surface^\ast$. 

It was proved in \cite{sonnet:variational} that with the aid of \eqref{eq:connectors} the pure measures of drilling and bending in \eqref{eq:w_d_w_b} can be given the form
\begin{subequations}
	\label{eq:w_d_w_b_connectors}
\begin{align}
	\wdr&=4|\V\cv^\ast-\R\cv|^2,\label{eq:wd_connectors}\\
	\wbe&=4\left[|\V(\nablast\normal^\ast)|^2-|\nablas\normal|^2\right]^2=4\left[\lambda_1^2(d_1^\ast)^2+\lambda_2^2(d_2^\ast)^2-d_1^2-d_2^2\right]^2.\label{eq:wb_connectors}
\end{align}
\end{subequations}

By \eqref{eq:V_v} and the chain rule, 
\begin{equation}
	\label{eq:nablas_v_1}
	\nablas\vv_1=\R\nablas\uv_1+\nablas\R\circ\uv_1=(\nablast\vv_1)\nablas\y.
\end{equation}
Making use of both \eqref{eq:H_definition} and the second set of equations in \eqref{eq:connectors}, we give \eqref{eq:nablas_v_1} the following form,
\begin{equation}
	\label{eq:compatibility_nablas_v_1}
	\uv_2\otimes\cv+\normal\otimes\dv_1+\H\circ\uv_1=\uv_2\otimes\U\R\trans\cv^\ast+\normal\otimes\U\R\trans\dv_1^\ast,
\end{equation}
where \eqref{eq:R_representation} has also been employed. Since $(\uv_1,\uv_2)$ is a basis of $\tplane_{\x}$ and, as can easily been shown, $(\H\circ\uv_1)\trans\uv_1=\zero$, equation \eqref{eq:compatibility_nablas_v_1} is equivalent to the pair
\begin{subequations}
	\label{eq:a_3_a_2_a_1}
	\begin{align}
		\V\cv^\ast-\R\cv&=\R\av_3,\label{eq:a_3}\\
		\V\dv_1^\ast-\R\dv_1&=-\R\av_2,\label{eq:a_2}
	\end{align}
where the following identities, which are immediate consequences of \eqref{eq:H_representation}, have also been used,
\begin{equation*}
	\label{eq:a_3_a_2_identities}
	(\H\circ\uv_1)\trans\uv_2=\av_3,\quad(\H\circ\uv_1)\trans\normal=-(\H\circ\normal)\trans\uv_1=-\av_2.
\end{equation*}
By reasoning similarly with the decomposition of $\nablas\vv_2$ that parallels \eqref{eq:nablas_v_1}, we supplement \eqref{eq:a_3} and \eqref{eq:a_2} with\footnote{Equations \eqref{eq:a_2} and \eqref{eq:a_1} can also be obtained by decomposing $\nablas\normal^\ast$  in a fashion similar to \eqref{eq:nablas_v_1}.}
\begin{equation}
	\label{eq:a_1}
	\V\dv_2^\ast-\R\dv_2=\R\av_1.
\end{equation}
\end{subequations}
The three equations \eqref{eq:a_3_a_2_a_1} give $\wdr$ and $\wbe$ a new form in terms of the three vectors $\av$,
\begin{equation}
	\label{eq:wd_wb_a}
	\wdr=4a_3^2,\quad\wbe=4\left(a_1^2+a_2^2+2(\av_1\cdot\dv_2-\av_2\cdot\dv_1) \right)^2.
\end{equation}

Two further properties of $\H$ will be of some use; we close this section by illustrating them.

First, as shown in detail in Appendix~\ref{sec:integrability}, the integrability condition \eqref{eq:integrability_vector} applied to $\F=\nablas\y$ reduces to the following system of scalar equations,
\begin{subequations}\label{eq:integrability_conditions}
\begin{align}[left={\empheqlbrace}]
	&\lambda_1(\dv_1-\av_2)\cdot\uv_2=\lambda_2(\dv_2+\av_1)\cdot\uv_1, \label{eq:integrability_condition_1}\\
	&\lambda_1(\av_3\cdot\uv_2)+(\lambda_1-\lambda_2)(\cv\cdot\uv_2)-\nablas\lambda_2\cdot\uv_1=0, \label{eq:integrability_condition_2}\\
	&\lambda_2(\av_3\cdot\uv_1)+(\lambda_2-\lambda_1)(\cv\cdot\uv_1)+\nablas\lambda_1\cdot\uv_2=0.\label{eq:integrability_condition_3}
\end{align}
\end{subequations}

Second, by expanding $\dv_1$ and $\dv_2$ in the frame $\frameu$ as done in \eqref{eq:c_d_1_components} for the generic frame $\frameen$, from \eqref{eq:a_2} and \eqref{eq:a_1} we derive a similar expansion for $\dv_1^\ast$ and $\dv_2^\ast$ in the frame $\framev$,
\begin{subequations}
\label{eq:d_1_ast_d_2_ast}
\begin{align}
	\dv_1^\ast&=\frac{1}{\lambda_1}(d_{11}-a_{21})\vv_1+\frac{1}{\lambda_2}(d_{12}-a_{22})\vv_2,\label{eq:d_1_ast}\\
	\dv_2^\ast&=\frac{1}{\lambda_1}(d_{21}+a_{11})\vv_1+\frac{1}{\lambda_2}(d_{22}+a_{12})\vv_2,\label{eq:d_2_ast}
\end{align}
\end{subequations}
where we have set $\av_1=a_{11}\uv_1+a_{12}\uv_2$ and $\av_2=a_{21}\uv_1+a_{22}\uv_2$. Use of \eqref{eq:integrability_condition_1} in \eqref{eq:d_1_ast_d_2_ast} makes sure that the connectors $(\dv_1^\ast,\dv_2^\ast)$ obey the symmetry condition \eqref{eq:connector_identity}.

Combining \eqref{eq:d_1_ast_d_2_ast} and \eqref{eq:gaussian_curvature}, we readily arrive at the following formula for the Gaussian curvature $K^\ast$ of $\surface^\ast$, where the $\av$ vectors also feature explicitly,
\begin{equation}
	\label{eq:K_ast}
	K^\ast=\frac{1}{\det\U}[K-(\av_1\times\av_2-\av_1\times\dv_1-\av_2\times\dv_2)\cdot\normal],
\end{equation}
where $K$ is the Gaussian curvature of $\surface$.

In the next sections, our development will build on the preliminary material collected in this section and the preceding one; to make our results easier to retrace, they will be presented in a formal, more structured way.

%\subsection{Drilling and bending contents}
%\subsection{Invariant measures of deformation}\label{sec:invariant}

\section{Conformal deformations}\label{sec:conformal}
Here we see how the properties of the invariant rotation gradient outlined above relate to those special deformations of $\surface$ that preserve \emph{angles}. Such deformations are called \emph{conformal} and are formally defined as follows.
\begin{definition}\label{def:conformal}
A deformation $\y$ is \emph{conformal} if
\begin{equation}
\label{eq:conformal_definition}
\frac{\uv\cdot\vv}{|\uv||\vv|}=\frac{(\nablas\y)\uv\cdot(\nablas\y)\vv}{|(\nablas\y)\uv||(\nablas\y)\vv|},\quad\forall\ \uv,\vv\in\tplane_{\x}\setminus\{\zero\}.
\end{equation}
\end{definition} 
\begin{remark}
	\label{rmk:conformal_C}
	It readily follows from \eqref{eq:C_B_tensors} that \eqref{eq:conformal_definition} can equivalently be rewritten as
	\begin{equation}
		\label{eq:conformal_rewritten}
		\frac{\uv\cdot\vv}{|\uv||\vv|}=\frac{\uv\cdot\C\vv}{(\uv\cdot\C\uv)^{1/2}(\vv\cdot\C\vv)^{1/2}}\quad\forall\ \uv,\vv\in\tplane_{\x}\setminus\{\zero\}.
	\end{equation}
\end{remark}
\begin{proposition}
	\label{prop:conformal_C}
	A deformation $\y$ is conformal if and only if
	\begin{equation}
	\label{eq:conformal_characterization}
	\C=\lambda^2\proj,
	\end{equation}
	where $\lambda>0$ is a scalar field on $\surface$.
\end{proposition}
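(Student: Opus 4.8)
The plan is to work pointwise at a fixed $\x\in\surface$ and exploit the reformulation \eqref{eq:conformal_rewritten} of conformality from Remark~\ref{rmk:conformal_C}, recalling that $\C$ is symmetric, is positive definite on the tangent plane $\tplane_{\x}$, and annihilates $\normal$ (because $\nablas\y$ does); the projector $\proj$ likewise annihilates $\normal$ and restricts to the identity on $\tplane_{\x}$. The sufficiency of \eqref{eq:conformal_characterization} is the quick half: if $\C=\lambda^2\proj$ then $\C\uv=\lambda^2\uv$ for every $\uv\in\tplane_{\x}$, so the right-hand side of \eqref{eq:conformal_rewritten} collapses to $\lambda^2(\uv\cdot\vv)/(\lambda|\uv|\,\lambda|\vv|)$, which is exactly its left-hand side.

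For necessity I fix $\x$ and diagonalize $\C$ on $\tplane_{\x}$: there is an orthonormal pair $(\e_1,\e_2)$ spanning $\tplane_{\x}$ with $\C\e_i=\mu_i\e_i$ and $\mu_1,\mu_2>0$. The test vectors $\uv=\e_1,\ \vv=\e_2$ give no information (both sides of \eqref{eq:conformal_rewritten} vanish), so instead I take $\uv=\e_1+\e_2$ and $\vv=\e_1-\e_2$: then $\uv\cdot\vv=0$, which forces the right-hand side of \eqref{eq:conformal_rewritten} to vanish, i.e. $\uv\cdot\C\vv=\mu_1-\mu_2=0$. Hence $\mu_1=\mu_2$; setting $\lambda^2(\x):=\mu_1=\mu_2>0$ we get $\C=\lambda^2(\e_1\otimes\e_1+\e_2\otimes\e_2)=\lambda^2\proj$ on $\tplane_{\x}$, and the identity extends to all of $\transl$ because both sides annihilate $\normal$. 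That $\lambda$ is a smooth, positive scalar field on $\surface$ follows from $2\lambda^2=\tr\C$, which inherits the regularity of $\nablas\y$.

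Conceptually, the content of the argument is that conformality makes $\C$ preserve orthogonality of tangent vectors, and a symmetric operator on a two-dimensional inner-product space that preserves orthogonality must be a scalar multiple of the identity there; the rest is bookkeeping. I therefore do not expect a genuine obstacle: the only points needing a little care are the choice of the ``diagonal'' test vectors $\e_1\pm\e_2$ (the obvious choice $\e_1,\e_2$ being vacuous) and the observation that $\C$ lives on the tangent plane, so the identification with $\lambda^2\proj$ must also be checked in the normal direction. As an alternative, one may run the whole proof through $\U=\C^{1/2}$: condition \eqref{eq:conformal_characterization} is equivalent to $\U=\lambda\proj$, i.e. to the coincidence $\lambda_1=\lambda_2$ of the principal stretches in \eqref{eq:U_V}, which is perhaps the most intuitive form of the statement.
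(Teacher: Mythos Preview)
Your proof is correct and follows essentially the same route as the paper: both diagonalize $\C$ on the tangent plane and then exploit the consequence $\uv\cdot\C\vv=0$ for $\uv\perp\vv$ to force the two eigenvalues to coincide. The only cosmetic difference is that the paper tests with the rotated pair $(\cos\vt\,\uv_1+\sin\vt\,\uv_2,\ -\sin\vt\,\uv_1+\cos\vt\,\uv_2)$ for all $\vt$, whereas you pick the single pair $\e_1\pm\e_2$ (i.e.\ $\vt=\pi/4$), which is in fact all that is needed.
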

\begin{proof}
	One implication is trivial as, for $\C$ as in \eqref{eq:conformal_characterization}, \eqref{eq:conformal_rewritten} becomes an identity. To prove that \eqref{eq:conformal_rewritten} implies \eqref{eq:conformal_characterization}, we first remark that \eqref{eq:conformal_rewritten} specializes into
	\begin{equation}
	\label{eq:conformal_orthogonal}
	\uv\cdot\C\vv=0\quad\text{for}\quad\uv\cdot\vv=0.
	\end{equation}
	Then we resort to \eqref{eq:U_V} and write
	\begin{equation}
		\label{eq:conformal_C}
		\C=\lambda_1^2\uv_1\otimes\uv_1+\lambda_2^2\uv_2\otimes\uv_2.
	\end{equation}
	By use of \eqref{eq:conformal_C} in \eqref{eq:conformal_orthogonal} with $\uv=\cos\vt\uv_1+\sin\vt\uv_2$ and $\vv=-\sin\vt\uv_1+\cos\vt\uv_2$, we readily arrive at
	\begin{equation}
	\label{eq:conformal_orthogonal_theta}
	\uv\cdot\C\vv=(\lambda_2^2-\lambda_1^2)\sin\vt\cos\vt=0\quad\forall\ \vt,
	\end{equation}
	which requires $\lambda_1^2=\lambda_2^2=\lambda^2$, and so implies \eqref{eq:conformal_characterization}.
\end{proof}
\begin{remark}
	\label{rmk:conformal_alternative}
	An alternative proof of Proposition~\ref{prop:conformal_C} was also obtained in \cite{seguin:coordinate} by a different reasoning.
\end{remark}
\begin{remark}
	\label{rmk:conformal_U_V}
	As a consequence of \eqref{eq:conformal_C} and \eqref{eq:U_V}, for a conformal deformation the stretching tensors read as follows,
	\begin{equation}
		\label{eq:conformal_U_V}
		\U=\lambda\proj\quad\text{and}\quad\V=\lambda\mathbf{P}(\normal^\ast).
	\end{equation}
\end{remark}
We now determine the $\av$ vectors that represent the invariant rotation gradient $\H$ for a conformal deformation.
\begin{remark}
	\label{rmk:conformal_a}
	Since for a conformal deformation the principal stretches are equal, $\lambda_1=\lambda_2=\lambda>0$, under the assumption that $\lambda$ be differentiable, it follows from \eqref{eq:integrability_condition_2} and \eqref{eq:integrability_condition_3} that 
	\begin{subequations}
		\label{eq:conformal_a}
		\begin{equation}
		\label{eq:conformal_a_3}
		\av_3=\normal\times\nablas\ln\lambda.
		\end{equation}
		Similarly, by use of \eqref{eq:conformal_U_V} in \eqref{eq:a_2} and \eqref{eq:a_1} we easily arrive at
		\begin{align}
			\av_2&=-\lambda\R\trans\dv_1^\ast+\dv_1,\label{eq:conformal_a_2}\\
			\av_1&=\lambda\R\trans\dv_2^\ast-\dv_2.\label{eq:conformal_a_1}
		\end{align}
	\end{subequations}
\end{remark}
\begin{remark}
	\label{rmk:conformal_c_ast}
	By combining \eqref{eq:conformal_a_3} and \eqref{eq:a_3}, we also write the spin connector $\cv^\ast$ associated on $\surface^\ast$ with the frame $\framev$ as
	\begin{equation}\label{eq:conformal_h}
		\cv^\ast=\R\h\quad\text{with}\quad\h:=\frac{1}{\lambda}\left(\cv+\normal\times\nablas\ln\lambda \right).
	\end{equation} 
\end{remark}
\begin{remark}
	\label{rmk:conformal_a_1_a_2}
	For a conformal deformation, the integrability condition \eqref{eq:integrability_condition_1} combined with the symmetry property \eqref{eq:connector_identity}  delivers
	\begin{equation}
	\label{eq:conformal_a_1_a_2}
	\av_1\cdot\e_1+\av_2\cdot\e_2=0,
	\end{equation}
	valid in any  frame $\frameen$ on $\surface$ with the corresponding choice of the $\av$ vectors.
\end{remark}
\begin{remark}
	\label{rmk:conformal_mean_curvature}
	It follows from \eqref{eq:mean_curvature}, \eqref{eq:d_connectors_diagonal}, and \eqref{eq:conformal_a_2}, \eqref{eq:conformal_a_1} that a conformal deformation transforms the mean curvature $H$ of $\surface$ into the mean curvature $H^\ast$ of $\surface^\ast$ according to the law,
	\begin{equation}
		\label{eq:conformal_mean_curvature}
		2H^\ast=\frac{1}{\lambda}\left(2H+\av_2\cdot\e_1-\av_1\cdot\e_2\right).
	\end{equation} 
\end{remark}
We are now in a position to relate the Gaussian curvature $K^\ast$ of $\surface^\ast$ to the Gaussian curvature $K$ of $\surface$ whenever the deformation $\y$ is conformal.
\begin{proposition}
	\label{prop:conformal_gaussian_curvature}
	For a conformal deformation $\y$ of $\surface$ into $\surface^\ast$,
	\begin{equation}
		\label{eq:conformal_gaussian_curvature}
		K^\ast=\frac{1}{\lambda^2}\left(K-\laps\ln\lambda\right),
	\end{equation}
	where $\laps:=\divs\nablas$ denotes the \emph{surface} Laplacian.
\end{proposition}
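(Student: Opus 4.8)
The plan is to derive \eqref{eq:conformal_gaussian_curvature} by combining the general formula \eqref{eq:K_ast} for $K^\ast$ with the conformal expressions for the $\av$ vectors collected in Remark~\ref{rmk:conformal_a}. Since the deformation is conformal, $\det\U=\lambda^2$, so the prefactor in \eqref{eq:K_ast} is $1/\lambda^2$, which already matches the right-hand side of \eqref{eq:conformal_gaussian_curvature}. It then remains to show that the bracketed correction term in \eqref{eq:K_ast}, namely $(\av_1\times\av_2-\av_1\times\dv_1-\av_2\times\dv_2)\cdot\normal$, equals $\laps\ln\lambda$. I would work throughout in the eigenframe $\frameen$ of $\nablas\normal$, so that by \eqref{eq:d_connectors_diagonal} we have $\dv_1=-\kappa_1\e_1$ and $\dv_2=-\kappa_2\e_2$; this keeps the algebra transparent.

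First I would set $\psi:=\ln\lambda$ and record $\av_3=\normal\times\nablas\psi$ from \eqref{eq:conformal_a_3}, together with $\av_1=\lambda\R\trans\dv_2^\ast-\dv_2$ and $\av_2=-\lambda\R\trans\dv_1^\ast+\dv_1$ from \eqref{eq:conformal_a_2}--\eqref{eq:conformal_a_1}. The key computational step is to evaluate the three normal-projected cross products. For $\av_1\times\dv_1$ and $\av_2\times\dv_2$, substituting the expressions for $\av_1,\av_2$ and using $\dv_1=-\kappa_1\e_1$, $\dv_2=-\kappa_2\e_2$ together with $\R\dv_i = \V\dv_i^\ast$ (i.e.\ the definitions rearranged) should produce terms involving $\kappa_1\kappa_2=K$ plus terms linear in the $\av$'s that will recombine. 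Alternatively — and this is the cleaner route — I would sidestep \eqref{eq:K_ast} entirely and instead apply the Gauss equation \eqref{eq:gauss_equation} on $\surface^\ast$: $K^\ast=-\curlst\cv^\ast\cdot\normal^\ast$. Using \eqref{eq:conformal_h}, $\cv^\ast=\R\h$ with $\h=\tfrac1\lambda(\cv+\normal\times\nablas\psi)$, and transporting the surface curl on $\surface^\ast$ back to $\surface$ via the deformation, one relates $\curlst\cv^\ast\cdot\normal^\ast$ to a surface-curl expression on $\surface$ evaluated on $\h$, picking up a factor $1/\lambda^2$ from the area element. Then $\curls(\tfrac1\lambda\cv)\cdot\normal$ and $\curls(\tfrac1\lambda\normal\times\nablas\psi)\cdot\normal$ are computed using the product rule for $\curls$, the identity $\curls(\normal\times\nablas\psi)\cdot\normal = -\divs\nablas\psi = -\laps\psi$ (which follows from the definitions in Sect.~\ref{sec:prelim}), and $\curls\cv\cdot\normal=-K$ from \eqref{eq:gauss_equation} on $\surface$; the gradient-of-$1/\lambda$ contributions cancel against the $\cv$ term up to the Jacobian-type factor, leaving exactly $K^\ast=\tfrac1{\lambda^2}(K-\laps\psi)$.

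The main obstacle I anticipate is bookkeeping the transformation of the surface curl (equivalently, of $K$) under the conformal map: one must be careful that $\curlst$ acts with respect to $\surface^\ast$ and $\normal^\ast$, and that pulling it back to $\surface$ introduces precisely the factor $\lambda^{-2}$ and no spurious terms. Concretely, the cleanest justification is to note that $\cv^\ast\cdot$ (tangent vector on $\surface^\ast$) pulls back via $\R$ to $\h\cdot$ (tangent vector on $\surface$), and that the defining relation \eqref{eq:surface_curl} together with $\nablast(\cdot)\nablas\y=\nablas(\cdot)$ forces $\curlst\cv^\ast\cdot\normal^\ast$ to equal $\tfrac1{\det\U}\curls\h\cdot\normal$ — this is the one identity I would prove carefully (or extract from a one-line computation using \eqref{eq:a_3} and the gliding laws \eqref{eq:connectors}). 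Everything after that is routine vector calculus on $\surface$, and the two non-trivial inputs, $\curls\cv\cdot\normal=-K$ and $\curls(\normal\times\nablas\psi)\cdot\normal=-\laps\psi$, are both already available from the preliminaries.
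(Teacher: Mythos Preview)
Your second (``cleaner'') route is precisely the paper's approach: apply the Gauss equation \eqref{eq:gauss_equation} on $\surface^\ast$ and pull $\cv^\ast=\R\h$ back to $\surface$. The paper carries this out by computing $\nablast\cv^\ast$ explicitly via the chain rule and an explicit formula for $\nablas\R$, eventually reaching
\[
\curls^\ast\cv^\ast\cdot\normal^\ast=\frac{1}{\lambda}\bigl(\curls\h\cdot\normal+\normal\times\nablas\ln\lambda\cdot\h\bigr),
\]
after which the remaining computation is the routine calculus you describe.

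However, two concrete statements in your sketch are wrong and would derail the calculation if taken literally. First, the pull-back identity you propose, $\curls^\ast\cv^\ast\cdot\normal^\ast=\tfrac{1}{\det\U}\curls\h\cdot\normal$, is incorrect: since $d\y=\lambda\R\,d\x$, the one-form $\cv^\ast\cdot d\y$ pulls back to $\lambda\h\cdot d\x=(\cv+\normal\times\nablas\psi)\cdot d\x$, so the correct identity is
\[
\curls^\ast\cv^\ast\cdot\normal^\ast=\frac{1}{\lambda^2}\,\curls(\lambda\h)\cdot\normal=\frac{1}{\lambda^2}\,\curls(\cv+\normal\times\nablas\psi)\cdot\normal.
\]
Second, your sign is wrong in the Laplacian identity: using the product formula for $\curls(\av\times\bv)$ from the paper's footnote with $\av=\normal$ and $\bv=\nablas\psi$ gives $\curls(\normal\times\nablas\psi)\cdot\normal=+\laps\psi$, not $-\laps\psi$. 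With both corrections in place the argument is immediate: $K^\ast=-\tfrac{1}{\lambda^2}\bigl(-K+\laps\psi\bigr)=\tfrac{1}{\lambda^2}(K-\laps\psi)$, with no residual ``gradient-of-$1/\lambda$'' terms to cancel. Your first route via \eqref{eq:K_ast} would also work but is genuinely messier; the paper does not pursue it.
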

\begin{proof}
By \eqref{eq:gauss_equation} applied to $\surface^\ast$, $K^\ast=-\curls\cv^\ast\cdot\normal^\ast$, and so we need first compute $\nablast\cv^\ast$. To this end, we remark that, by the chain rule,
\begin{equation}
	\label{eq:confornal_nablas_c_ast}
	\nablas\cv^\ast=(\nablast\cv^\ast)\nablas\y=\lambda(\nablast\cv^\ast)\mathbf{P}(\normal^\ast)\R,
\end{equation}
which, also by \eqref{eq:conformal_h}, yields
\begin{equation}
\label{eq:conformal_nablast_c_ast}
\nablast\cv^\ast=\frac{1}{\lambda}\nablas(\R\h)\R\trans=\frac{1}{\lambda}\left(\R(\nablas\h)\R\trans+(\nablas\R\circ\h)\R\trans\right).
\end{equation}
To compute $\nablas\R$ for $\R$ expressed as in \eqref{eq:R_representation}, we need the following formulae,
\begin{equation}
	\label{eq:nablas_v_s}
	\begin{cases}
		\nablas\vv_1=\lambda\vv_2\otimes\h+\normal^\ast\otimes\R\trans\dv_1^\ast,\\
		\nablas\vv_2=-\lambda\vv_1\otimes\h+\normal^\ast\otimes\R\trans\dv_2^\ast,\\
		\nablas\normal^\ast=-\lambda\vv_1\otimes\R\trans\dv^\ast_1-\lambda\vv_2\otimes\R\trans\dv_2^\ast,
	\end{cases}
\end{equation}
which are obtained by applying the chain rule and \eqref{eq:conformal_h} to the second set of equations in \eqref{eq:connectors}. By use of \eqref{eq:nablas_v_s}, the explicit form of $\h$ in \eqref{eq:conformal_h}, and both \eqref{eq:conformal_a_2} and \eqref{eq:conformal_a_1}, we obtain that 
\begin{equation}
	\label{eq:conformal_nablas_R}
	\begin{split}
	\nablas\R&=\vv_1\otimes(\normal\otimes\av_2-\uv_2\otimes\normal\times\nablas\ln\lambda)-\vv_2\otimes(\normal\otimes\av_1-\uv_1\otimes\normal\times\nablas\ln\lambda)\\
	&+\normal^\ast\otimes(\uv_2\otimes\av_1-\uv_1\otimes\av_2).
	\end{split}
\end{equation}
It is then a simple matter to arrive at
\begin{equation}\label{eq:conformal_intermediate}
	\begin{split}
		(\nablas\R\circ\h)\R\trans&=(\h\cdot\uv_1)[\R(\uv_2\otimes\normal\times\nablas\ln\lambda)\R\trans-\normal^\ast\otimes\R\av_2]\\
		&-(\h\cdot\uv_2)[\R(\uv_1\otimes\normal\times\nablas\ln\lambda)\R\trans-\normal^\ast\otimes\R\av_1].
	\end{split}
\end{equation}
Since both $\R\av_1$ and $\R\av_2$ are vectors orthogonal to $\normal^\ast$, so also are the axial vectors associated with the skew-symmetric parts of both $\normal^\ast\otimes\R\av_1$ and $\normal^\ast\otimes\R\av_2$. Thus, combining \eqref{eq:conformal_nablast_c_ast} and \eqref{eq:conformal_intermediate}, we find that
\begin{equation}
	\label{eq:conformal_pre_curvature}
	\curls^\ast\cv^\ast\cdot\normal^\ast=\frac{1}{\lambda}\left(\curls\h\cdot\normal+\normal\times\nablas\ln\lambda\cdot\h\right).
\end{equation}
To conclude the proof, we must compute\footnote{Here use is made of the identity $$\curls(\av\times\bv)=(\nablas\av)\bv-(\nablas\bv)\av+(\divs\bv)\av-(\divs\av)\bv,$$ valid for generic vector fields $\av$ and $\bv$ on $\surface$.}
\begin{subequations}\label{eq:conformal_computations}
\begin{equation}
	\label{eq:conformal_computations_1}
	\curls\h\cdot\normal=\frac{1}{\lambda}\curls\cv\cdot\normal+\nablas\frac{1}{\lambda}\times\cv\cdot\normal+\nablas\frac{1}{\lambda}\cdot\nablas\ln\lambda+\frac{1}{\lambda}\laps\ln\lambda
\end{equation}
and 
\begin{equation}
	\label{eq:conformal_computations_2}
	\normal\times\nablas\ln\lambda\cdot\h=\frac{1}{\lambda}\left(\nablas\ln\lambda\times\cv\cdot\normal+|\nablas\ln\lambda|^2\right).
\end{equation}
\end{subequations}
The desired result follows from inserting \eqref{eq:conformal_computations} in \eqref{eq:conformal_pre_curvature}, as
\begin{equation*}
\nablas\frac{1}{\lambda}\cdot\nablas\ln\lambda=-\frac{1}{\lambda}|\nablas\ln\lambda|^2=-\frac{1}{\lambda^3}|\nablas\lambda|^2
\end{equation*}
and $K=-\curls\cv\cdot\normal$.
\end{proof}
\begin{remark}
	\label{rmk:conformal_interpretation}
	Equation \eqref{eq:conformal_gaussian_curvature} clearly shows how  the Gaussian curvature of a conformally deformed surface is uniquely determined by the Gaussian curvature of the undeformed surface and the metric tensor $\C$ in \eqref{eq:conformal_characterization}, thus confirming the intrinsic nature of $K$. Contrasting \eqref{eq:conformal_mean_curvature} to \eqref{eq:conformal_gaussian_curvature} shows instead how $H$ fails to be intrinsic, as in a conformal deformation it is also affected by the invariant rotation gradient $\H$.
\end{remark}
\begin{remark}
	\label{rmk:phi_definition}
	Setting $\lambda=\ex^\phi$ in \eqref{eq:conformal_gaussian_curvature}, we rewrite the latter in the equivalent form,
	\begin{equation}
		\label{eq:conformal_gaussian_curvature_rewritten}
		\laps\phi+K^\ast\ex^{2\phi}-K=0,
	\end{equation}
	which is more amenable to serve analytic purposes.
\end{remark}
\begin{remark}
	\label{rmk:moser}
	If $\surface$ is given (and so is $K$) and $K^\ast(\y(\x))$ is prescribed on $\surface$ as the Gaussian curvature of a target (unknown) surface $\surface^\ast$ related to $\surface$ by a conformal deformation $\y$, yet to be determined, equation  \eqref{eq:conformal_gaussian_curvature_rewritten} becomes a necessary condition for the existence of such a deformation.
	For the unit sphere $\sphere$ in three-dimensional space, \eqref{eq:conformal_gaussian_curvature_rewritten} reads as
	\begin{equation}
		\label{eq:conformal_gaussian_curvature_sphere}
		\laps\phi+K^\ast\ex^{2\phi}-1=0,
	\end{equation}
	which is an equation with an interesting history. Based on geometric intuition, one may be led to conjecture that \eqref{eq:conformal_gaussian_curvature_sphere} has always a solution provided that $K^\ast$ only takes on positive values. This is however false, as shown in \cite{kazdan:integrability}, where positive Gaussian curvatures $K^\ast$ unrealizable by conformal deformation of a sphere were explicitly constructed. A partial positive answer to the existence of conformally realizable curvatures was given in \cite{moser:nonlinear}, where for $K^\ast$ enjoying the \emph{antipoldal} symmetry, that is, such that $K^\ast(\y(-\x))=K^\ast(\y(\x))$, it was proved that \eqref{eq:conformal_gaussian_curvature_sphere} admits a solution with the same symmetry, provided that $\max_{\x\in\sphere} K^\ast(\y(\x))>0$.\footnote{A similar result had also been proved in \cite{koutroufiotis:gaussian}, but for $K^\ast$ close to a constant, so that $\surface^\ast$ is indeed an \emph{ovaloid}. The search for such a surface conformally obtained from a sphere was apparently a problem initially posed by Niremberg. The result proved in \cite{moser:nonlinear} removes the smallness condition enforced in \cite{koutroufiotis:gaussian}, thus providing a general solution to Niremberg's problem.}
\end{remark}
In the following section, we shall consider an even smaller class of deformations $\y$, for which the metric tensor $\C$ is simpler than in \eqref{eq:conformal_characterization} and, as we shall see, the metric implications are more stringent.

\section{Isometric deformations}\label{sec:isometric}
To restrict the range of conformal deformations of a surface, we first consider the class of all deformations that preserve area.
\begin{definition}
	\label{def:isoareal}
	A deformation $\y$ of $\surface$ is said to be \emph{isoareal} if
	\begin{equation}
	\label{eq:isoareal}
	A(\y(\mathscr{A}))= A(\mathscr{A})\quad\forall\ \mathscr{A}\subset\surface,
	\end{equation}
	where $A$ is the area measure.
\end{definition}
\begin{remark}
	\label{rmk:area_ratio}
	Since the element area ratio for a deformation $\y$ of $\surface$ into $\surface^\ast$ can be written as
	\begin{equation}
		\label{eq:area_ratio}
		\frac{\dd A^\ast}{\dd A}=\frac{|(\nablas\y)\uv\times(\nablas\y)\vv|}{|\uv\times\vv|}\quad\forall\ \uv,\vv\in\tplane_{\x} \text{ with } \uv\times\vv\neq\zero,
	\end{equation}
	it readily follows from \eqref{eq:deformation_gradient} that \eqref{eq:isoareal} can be equivalently written in the form (see also \cite{seguin:coordinate})
	\begin{equation}
		\label{eq:area_integral}
		\int_{\mathscr{A}}(\det\U-1)\dd A=0,
	\end{equation}
	and so \eqref{eq:isoareal} reduces to the local requirement
	\begin{equation}
		\label{eq:isoareal_local}
		\det\U=\det\V=1.
	\end{equation}
\end{remark}
\begin{definition}
	\label{def:isometry}
	A deformation $\y$ that preserves both angles and area is said to be an \emph{isometry}.\footnote{Here, \emph{isometry} and \emph{isometric deformation} are used as synonyms, although in differential geometry the former usually refers to a larger class of mappings than the latter (see Remark~\ref{rmk:reflection} below for a simple example of this difference.)}
\end{definition}
\begin{remark}
	\label{rmk:isometry}
	It follows from Proposition~\ref{prop:conformal_C} and \eqref{eq:isoareal_local} that an isometry of $\surface$ is characterized by having 
	\begin{subequations}
		\label{eq:isometry_deformation_gradient}
		\begin{equation}
			\label{eq:isometry_deformation_gradient_a}
			\nablas\y=\R\proj\quad\text{with}\quad\R\in\orth,
		\end{equation}
		for which
		\begin{equation}
			\label{eq:isometry_deformation_gradient_b}
			\C=\U=\proj\quad\text{and}\quad\B=\V=\mathbf{P}(\normal^\ast),
		\end{equation}
	\end{subequations}
	amounting to letting $\lambda\equiv1$ in \eqref{eq:conformal_characterization}.
\end{remark}
We now prove a number of properties enjoyed by isometries.
%First in general, then for special surfaces or restricted classes of deformations.
\begin{proposition}
	\label{prop:isometry_a_3}
	For an isometric deformation $\y$, the following equations are valid in general,
	\begin{equation}
		\label{eq:isometry_a_3}
		\av_3=\zero\quad\text{and}\quad K^\ast=K.
	\end{equation}
\end{proposition}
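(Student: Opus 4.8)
The plan is to exploit the fact, recorded in Remark~\ref{rmk:isometry}, that an isometry is nothing but a conformal deformation with $\lambda\equiv1$; hence every result established in Sect.~\ref{sec:conformal} specializes to the present setting simply by setting $\lambda=1$ (and, a fortiori, $\nablas\ln\lambda=\zero$ and $\laps\ln\lambda=0$), with the differentiability hypothesis on $\lambda$ trivially fulfilled. With this in hand, the two claims in \eqref{eq:isometry_a_3} follow almost immediately.

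First I would obtain $\av_3=\zero$. The quickest route is to invoke \eqref{eq:conformal_a_3}, which gives $\av_3=\normal\times\nablas\ln\lambda=\zero$ when $\lambda=1$. To keep the argument self-contained and independent of the conformal analysis, one may alternatively argue directly from the integrability conditions: since an isometry has $\lambda_1=\lambda_2=1$, so that $\nablas\lambda_1=\nablas\lambda_2=\zero$, equations \eqref{eq:integrability_condition_2} and \eqref{eq:integrability_condition_3} collapse to $\av_3\cdot\uv_2=0$ and $\av_3\cdot\uv_1=0$. Because $(\uv_1,\uv_2)$ spans the tangent plane $\tplane_{\x}$ and $\av_3$ is, by construction in \eqref{eq:H_representation}, a tangential vector, both conditions together force $\av_3=\zero$.

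Next I would establish $K^\ast=K$. This is the Theorema Egregium, and it drops out of Proposition~\ref{prop:conformal_gaussian_curvature}: substituting $\lambda\equiv1$ into \eqref{eq:conformal_gaussian_curvature} gives $K^\ast=K-\laps\ln1=K$. (A slightly longer, fully elementary alternative would start from \eqref{eq:K_ast}, use $\det\U=1$ from \eqref{eq:isoareal_local}, and then show that $(\av_1\times\av_2-\av_1\times\dv_1-\av_2\times\dv_2)\cdot\normal$ vanishes for an isometry; but this detour through the $\av_1,\av_2$ terms is more laborious than necessary, so I would not pursue it.)

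There is, in truth, no genuine obstacle here: the statement is a direct corollary of the conformal results already proved, the only point requiring a word of care being the observation that $\av_3$ is tangential, so that the two scalar conditions $\av_3\cdot\uv_1=\av_3\cdot\uv_2=0$ suffice to conclude $\av_3=\zero$. The proof is therefore short, and its main purpose is to make explicit, within the present framework, the classical fact that isometric deformations leave the Gaussian curvature unchanged while, as \eqref{eq:conformal_mean_curvature} already hinted, the mean curvature and the invariant rotation gradient remain free to vary.
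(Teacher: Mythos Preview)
Your proof is correct and follows essentially the same approach as the paper: both observe that an isometry is a conformal deformation with $\lambda\equiv1$ and then specialize \eqref{eq:conformal_a_3} and \eqref{eq:conformal_gaussian_curvature} accordingly. Your additional self-contained alternative via \eqref{eq:integrability_condition_2}--\eqref{eq:integrability_condition_3} is a nice touch, but the paper's proof is content with the direct specialization.
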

\begin{proof}
Since an isometry is a special conformal deformation, the two claims in \eqref{eq:isometry_a_3} are obtained by setting $\lambda\equiv1$ in \eqref{eq:conformal_a_3} and \eqref{eq:conformal_gaussian_curvature}, respectively.
\end{proof}
\begin{remark}
	\label{rmk:isometry_a_3}
	It follows from Proposition~\ref{prop:isometry_a_3} and \eqref{eq:wd_wb_a} that $\wdr=0$, and so no drilling elastic energy can be associated with an isometric deformation.
\end{remark}
\begin{remark}
	\label{rmk:theorema_egregium}
	The second claim in \eqref{eq:isometry_a_3} states that an isometry leaves the Gaussian curvature unchanged, which is one form of the classical \emph{theorema egregium} of Gauss (see, for example, \cite[p.\,140]{needham:visual}).
\end{remark}

\subsection{Pure rotation}\label{sec:rotations}
A rigid rotation of $\surface$ in three-dimensional space represented as $\y(\x)=\R\x$, where $\R$ is any given (constant) element of $\orth$, is clearly an isometry of $\surface$, as it satisfies \eqref{eq:isometry_deformation_gradient_a}. It is well-known that isometric deformations are far more general than uniform rotations. However, there are instances where an isometry is necessarily a uniform rotation. Two such such cases are examined below. 

If both $\surface$ and $\surface^\ast$ are the same sphere, say $\sphere$, for simplicity, then the only isometric deformations of $\surface$ onto $\surface^\ast$ are rigid motions in the ambient three-dimensional space. This is more formally stated as follows.
\begin{proposition}
	\label{prop:sphere}
	If $\y$ is an isometric deformation of $\sphere$ onto $\sphere$, then its polar rotation $\R$ is uniform, that is, $\nablas\R=\zero$.
\end{proposition}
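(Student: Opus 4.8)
The plan is to reduce the statement to showing that the invariant rotation gradient $\H=\R\trans\nablas\R$ vanishes identically; since $\R\in\orth$ this is the same as $\nablas\R=\R\H=\zero$. By the representation \eqref{eq:H_representation} and the linear independence of $\W(\uv_1)$, $\W(\uv_2)$, $\W(\normal)$, the vanishing of $\H$ is equivalent to the vanishing of the three tangential vectors $\av_1$, $\av_2$, $\av_3$. An isometric deformation is a conformal one with $\lambda\equiv1$, so Proposition~\ref{prop:isometry_a_3} already gives $\av_3=\zero$; the whole burden is therefore to prove $\av_1=\av_2=\zero$.

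For that, the idea I would use is that $\sphere$ is \emph{totally umbilic}, which trivializes the curvature connectors of \emph{every} orthonormal frame — and since $\U=\proj$ for an isometry, every orthonormal frame on $\surface$ serves as a frame of principal directions of stretching, so one is free to pick any convenient one. On $\surface=\sphere$ one has $\nablas\normal=\proj$, and matching this with the third gliding law in \eqref{eq:connectors} forces $\dv_1=-\uv_1$ and $\dv_2=-\uv_2$. Because the deformation is \emph{onto} $\sphere$, $\surface^\ast$ is again the unit sphere, and — using that $\y$ preserves orientation, so that the coherently oriented $\normal^\ast$ is the \emph{outward} normal of $\surface^\ast$ — one likewise has $\nablast\normal^\ast=\mathbf{P}(\normal^\ast)$, whence $\dv_1^\ast=-\vv_1$ and $\dv_2^\ast=-\vv_2$. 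Since $\vv_i=\R\uv_i$, this yields $\R\trans\dv_i^\ast=-\uv_i=\dv_i$ for $i=1,2$. Inserting $\lambda\equiv1$ together with these identities into \eqref{eq:conformal_a_2} and \eqref{eq:conformal_a_1} (equivalently, into \eqref{eq:a_2} and \eqref{eq:a_1} with $\V=\mathbf{P}(\normal^\ast)$) collapses both right-hand sides to zero: $\av_2=\dv_1-\R\trans\dv_1^\ast=\zero$ and $\av_1=\R\trans\dv_2^\ast-\dv_2=\zero$. With $\av_3=\zero$ already in hand, $\H=\zero$ and hence $\nablas\R=\zero$.

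The one step that genuinely needs care — and, I expect, the only real obstacle — is the claim that $\normal^\ast$ is the \emph{outward} normal of $\surface^\ast=\sphere$, equivalently that its principal curvatures are $+1$ and not $-1$. This is precisely where orientation preservation of the deformation must be invoked: $\normal^\ast$ is defined to be coherently oriented with the (outward) field $\normal$ on the source sphere, and a (degree-one) self-map of $\sphere$ carries the standard, outward orientation forward to itself, so $\normal^\ast$ is outward. The distinction is not merely pedantic: were $\normal^\ast$ the inward normal — the situation for the orientation-reversing maps in $\mathsf{O}(3)\setminus\orth$, which are not admissible deformations here — one would instead get $\R\trans\dv_i^\ast=+\uv_i$, hence $\av_2=-2\uv_1$ and $\av_1=2\uv_2$, a nonzero $\H$ that nonetheless satisfies the integrability identities \eqref{eq:integrability_conditions}, the curvature relation \eqref{eq:K_ast} with $K^\ast=K$, and even $\wdr=\wbe=0$; it is ruled out only by the orientation of the deformation. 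Apart from this point, the argument is a short computation; in particular, no appeal to the Gauss--Codazzi equations or to a PDE rigidity argument is needed, total umbilicity doing all the work.
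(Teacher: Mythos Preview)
Your argument is correct, and it takes a genuinely different route from the paper's. Rather than working through the curvature connectors, the paper writes any self-map of $\sphere$ as $\y(\x)=\R(\x)\x$ with $\R$ the polar rotation (this already uses $\R\normal=\normal^\ast=\y(\x)$, exactly the orientation point you flag), differentiates using $\normal=\x$ to obtain $\nablas\y=\R(\proj+\H\circ\normal)$, and reads off from the isometry condition $\nablas\y=\R\proj$ that $\H\circ\normal=-\uv_2\otimes\av_1+\uv_1\otimes\av_2=\zero$, whence $\av_1=\av_2=\zero$; then $\av_3=\zero$ as in your argument. Your approach instead amounts to checking that $\nablast\normal^\ast=\R\curvature\R\trans$ holds on the sphere (both sides equal $\mathbf{P}(\normal^\ast)$) and then running the computation that the paper packages later as Proposition~\ref{prop:curvature_frame_indifference}; you are more explicit about the dependence on orientation-preservation, which the paper defers to Remark~\ref{rmk:reflection}. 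The paper's version is slightly more self-contained in that it does not invoke the $\av$--connector identities \eqref{eq:a_2} and \eqref{eq:a_1}, and it makes the locality noted in Remark~\ref{rmk:sphere_locality} transparent via the position-vector identity $\normal=\x$; your version shares that locality but arrives at it through total umbilicity, which has the virtue of showing the sphere result as a special instance of the curvature-frame-indifference criterion.
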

\begin{proof}
	Since $\R$ is invertible, by \eqref{eq:H_definition}, the claim of the proposition is equivalent to say that $\H$ must vanish.
	
	A deformation $\y:\sphere\to\sphere$ can be represented as
	\begin{equation}
		\label{eq:sphere_deformation}
		\y(\x)=\R(\x)\x,
	\end{equation}
	where $\R(\x)\in\orth$. By differentiating both sides of \eqref{eq:sphere_deformation}, we arrive at 
	\begin{equation}
		\label{eq:sphere_deformation_gradient}
		\nablas\y=\R(\proj+\H\circ\normal),
	\end{equation}
	where we have used the identity $\normal(\x)=\x$ that holds on $\sphere$. By \eqref{eq:sphere_deformation_gradient} and \eqref{eq:H_representation}, $\y$ is an isometry if and only if
	\begin{equation}
		\label{eq:sphere_H_nu}
		\H\circ\normal=-\uv_2\otimes\av_1+\uv_1\otimes\av_2=\zero.
	\end{equation}
	Since $\uv_1$ and $\uv_2$ are linearly independent, \eqref{eq:sphere_H_nu} is equivalent to requiring that $\av_1=\av_2=\zero$, that is, again by \eqref{eq:H_representation}, that
	\begin{equation}
		\label{eq:sphere_H}
		\H=\W(\normal)\otimes\av_3,
	\end{equation}
	and so $\H$ vanishes by \eqref{eq:isometry_a_3}.
\end{proof}
\begin{remark}
	\label{rmk:reflection}
	Taking for $\R$ a reflection\footnote{So that $-\R$ is in $\orth$ instead of $\R$.} across a great circle of $\sphere$ would provide via \eqref{eq:sphere_deformation} an isometric map $\y$ of $\sphere$ onto itself, but such a  map would reverse the orientation of the surface, which (according to our definition) a deformation is not allowed to do.
\end{remark}
\begin{remark}
	\label{rmk:sphere_locality}
	As is clear from the proof of Proposition~\ref{prop:sphere}, the uniformity of $\R$ is a local property, which also applies if the deformation $\y$  just maps isometrically a part of $\sphere$ into another. Thus, this is a weaker form of the general result applied in \cite{sonnet:neutral} to identity a special class of M\"obius transformations as uniform rotations of the whole Riemann sphere.
\end{remark}
If an isometric deformation of a generic surface $\surface$ is further restricted, it may be reduced to a uniform rotation. One such restriction is the request of frame-indifference for the curvature tensor.
\begin{proposition}
	\label{prop:curvature_frame_indifference}
	Let $\y:\surface\to\euclid$ be an isometric deformation such that 
	\begin{equation}\label{eq:curvature_frame_indifference}
		\nablast\normal^\ast=\R\curvature\R\trans,
	\end{equation}
	where $\R$ is the polar rotation of $\y$. Then $\nablas\R=\zero$.\footnote{A different, perhaps longer proof of this result can be found in \cite[p.\,316]{o'neill:elementary}, see also \cite{grove:closed} for a similar result about convex surfaces.}
\end{proposition}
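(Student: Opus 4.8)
The plan is to show that the hypothesis forces the invariant rotation gradient $\H$ to vanish identically; since $\H=\R\trans\nablas\R$ by \eqref{eq:H_definition} and $\R$ is invertible, this yields $\nablas\R=\zero$ at once. Concretely, I will show that all three tangential vectors $\av_1,\av_2,\av_3$ in the representation \eqref{eq:H_representation} of $\H$ are zero.

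First I would translate the frame-indifference condition \eqref{eq:curvature_frame_indifference} into a relation between the curvature connectors on $\surface$ and those on $\surface^\ast$. Reading the third gliding law in each block of \eqref{eq:connectors} gives $\curvature=-\uv_1\otimes\dv_1-\uv_2\otimes\dv_2$ on $\surface$ and $\nablast\normal^\ast=-\vv_1\otimes\dv_1^\ast-\vv_2\otimes\dv_2^\ast$ on $\surface^\ast$, with $\dv_1^\ast,\dv_2^\ast$ lying in the tangent plane of $\surface^\ast$. Using $\vv_i=\R\uv_i$ from \eqref{eq:V_v}, the right-hand side of \eqref{eq:curvature_frame_indifference} becomes $\R\curvature\R\trans=-\vv_1\otimes\R\dv_1-\vv_2\otimes\R\dv_2$. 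Comparing this with $\nablast\normal^\ast$ and using the linear independence of $\vv_1,\vv_2$, I conclude $\dv_1^\ast=\R\dv_1$ and $\dv_2^\ast=\R\dv_2$.

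Next I would feed these identities back into the fundamental relations \eqref{eq:a_2} and \eqref{eq:a_1}. For an isometry one has $\V=\mathbf{P}(\normal^\ast)$ by \eqref{eq:isometry_deformation_gradient_b}, so $\V\dv_i^\ast=\dv_i^\ast$ since $\dv_i^\ast$ is tangential to $\surface^\ast$. Substituting $\dv_1^\ast=\R\dv_1$ in $\V\dv_1^\ast-\R\dv_1=-\R\av_2$ gives $\av_2=\zero$, and substituting $\dv_2^\ast=\R\dv_2$ in $\V\dv_2^\ast-\R\dv_2=\R\av_1$ gives $\av_1=\zero$. Since $\av_3=\zero$ for every isometry by Proposition~\ref{prop:isometry_a_3}, the representation \eqref{eq:H_representation} collapses to $\H=\zero$, and hence $\nablas\R=\zero$.

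The argument is short and purely algebraic once the hypothesis has been recast as $\dv_i^\ast=\R\dv_i$; the only step requiring a little care is that extraction, which relies on writing both $\nablast\normal^\ast$ and $\R\curvature\R\trans$ as sums of dyads with the \emph{same} linearly independent left factors $\vv_1,\vv_2$, so that the right factors must agree, together with the immediate observation that the starred curvature connectors lie in the tangent plane of $\surface^\ast$. Beyond this I do not foresee any genuine obstacle.
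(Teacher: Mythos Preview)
Your proof is correct and follows essentially the same route as the paper: both arguments extract $\dv_i^\ast=\R\dv_i$ from \eqref{eq:curvature_frame_indifference} via the gliding laws \eqref{eq:connectors}, then feed this into \eqref{eq:a_2}, \eqref{eq:a_1} (equivalently, \eqref{eq:conformal_a_2}, \eqref{eq:conformal_a_1} with $\lambda=1$) to force $\av_1=\av_2=\zero$, and invoke Proposition~\ref{prop:isometry_a_3} for $\av_3=\zero$. Your dyadic extraction step is spelled out a touch more explicitly than in the paper, but the logic is identical.
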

\begin{proof}
	To prove our claim, it suffices to show that $\H\equiv\zero$, that is, that $\av_1=\av_2=\zero$, since $\av_3=\zero$ for all isometries by Proposition~\ref{prop:isometry_a_3}. Now, combining \eqref{eq:connectors} and \eqref{eq:curvature_frame_indifference}, we obtain from \eqref{eq:R_representation} that
	\begin{equation}
		\label{eq:curvature_connectors}
		\dv^\ast_i=\R\dv_i\quad\text{for}\quad i=1,2.
	\end{equation}
	The desired conclusion then follows from setting $\lambda=1$ in \eqref{eq:conformal_a_2} and \eqref{eq:conformal_a_1}.
\end{proof}
\begin{remark}
	\label{rmk:curvature_connectors}
	While \eqref{eq:curvature_connectors} requires \eqref{eq:curvature_frame_indifference} to hold, its analog for the spin connector $\cv$,
	\begin{equation}
		\label{eq:c_connector}
		\cv^\ast=\R\cv,
	\end{equation}
	which follows from setting $\lambda=1$ in \eqref{eq:conformal_h},  is valid for all isometries.
\end{remark}

\subsection{Pure drilling}\label{sec:drilling}
We have already observed (see Remark~\ref{rmk:isometry_a_3}) that an isometric deformation of a surface comes with no drilling energy. It is then interesting to classify all  non-trivial isometries (different from a uniform rotation) that only entail drilling.\footnote{That is, for which there is a frame where $\Rb=\I$.} These would  carry no elastic energy of any other sort: no stretching energy, because $\wst$ in \eqref{eq:w_s} vanishes on isometries, and no bending energy either because $\wbe$ vanishes on a pure drilling mode (by construction). Were there a \emph{continuum} of energy-free deformations generating non-congruent shapes all in the same elastic ground state, they would  represent instances of \emph{soft elasticity} within the theory of shells proposed in \cite{sonnet:variational}. Such instances have already been proved to exist for minimal surfaces in \cite{sonnet:neutral}; here we show that these are indeed the only possible ones.
\begin{proposition}
	\label{prop:minimal_surfaces}
	An isometric deformation $\y$ of $\surface$ can be a pure drilling only if $\surface$ is a minimal surface, for which $\y$ reduces to a classical Bonnet transformation.\footnote{In the theory of minimal surfaces, the Bonnet transformation is responsible, in particular, for changing a catenoid into a helicoid (see, for example, pp.\,102 and 149 of \cite{dierkes:minimal}.)}
\end{proposition}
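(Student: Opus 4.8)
The plan is to read off the three tangential connectors $\av_1,\av_2,\av_3$ of the invariant rotation gradient $\H$ of a pure drilling in the principal frame $\frameu$, and then confront them with the constraints already collected. So suppose $\y$ is isometric and a pure drilling, i.e.\ its polar rotation $\R=\Rd$ is, at each point, a rotation about the normal $\normal$ through an angle $\psi$ which is a scalar field on $\surface$. Since $\R\normal=\normal$, the representation $\eqref{eq:R_representation}$ shows that $\normal^\ast=\normal$ along $\surface$ (that is, $\normal^\ast\circ\y=\normal$), while Proposition~\ref{prop:isometry_a_3} already gives $\av_3=\zero$ because $\y$ is an isometry. Differentiating $\vv_i=\R\uv_i$ by means of $\eqref{eq:nablas_v_1}$ and the gliding laws $\eqref{eq:connectors}$, and using $\nablas\R=\R\H$ (an immediate consequence of $\eqref{eq:H_definition}$) to isolate $\H\circ\uv_1$ in the sense of $\eqref{eq:H_a}$, one matches the outcome with $\eqref{eq:H_representation}$ to obtain
\begin{equation*}
	\av_3=\nablas\psi,\qquad \av_2=(1-\cos\psi)\dv_1-\sin\psi\,\dv_2,\qquad \av_1=-\sin\psi\,\dv_1-(1-\cos\psi)\dv_2.
\end{equation*}
The first identity, together with $\av_3=\zero$, forces $\nablas\psi=\zero$: the drilling angle $\psi$ is constant on each connected component of $\surface$.

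Next I would bring in the integrability constraint. An isometry is in particular conformal, so Remark~\ref{rmk:conformal_a_1_a_2} applies in the frame $\frameu$ and reads $\av_1\cdot\uv_1+\av_2\cdot\uv_2=0$ (equivalently, one may invoke $\eqref{eq:integrability_condition_1}$ with $\lambda_1=\lambda_2=1$). Substituting the expressions for $\av_1,\av_2$ above, and using $\eqref{eq:mean_curvature}$ in the form $\dv_1\cdot\uv_1+\dv_2\cdot\uv_2=-2H$ together with the symmetry $d_{12}=d_{21}$, everything collapses to $H\sin\psi=0$. Since $\psi$ is constant, only two cases survive. Either $\sin\psi=0$, so that $\psi\in\{0,\pi\}$ and $\R$ is $\I$ or $2\normal\otimes\normal-\I$; then $\nablas\y=\pm\proj$, $\y$ is the restriction to $\surface$ of a rigid motion of the ambient space, and no energy of any mode is stored --- these are the trivial deformations excluded by the statement. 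Or else $H\equiv0$, i.e.\ $\surface$ is a minimal surface.

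It then remains to identify $\y$ as a classical Bonnet transformation in the minimal case. Here the eigenframe $\eqref{eq:curvature_diagonal}$ gives $\curvature=\kappa_1(\uv_1\otimes\uv_1-\uv_2\otimes\uv_2)$, and from $\normal^\ast\circ\y=\normal$ and the chain rule, $(\nablast\normal^\ast)(\nablas\y)=\nablas\normal=\curvature$. Since $\nablas\y=\R\proj$ and $\normal^\ast=\normal$ (so $\tplane^\ast=\tplane$), the curvature tensor of $\surface^\ast$ is
\begin{equation*}
	\nablast\normal^\ast=\curvature\R\trans=\kappa_1\left[\cos\psi(\uv_1\otimes\uv_1-\uv_2\otimes\uv_2)+\sin\psi(\uv_1\otimes\uv_2+\uv_2\otimes\uv_1)\right],
\end{equation*}
that is, the traceless tensor $\curvature$ rotated through the constant angle $\psi$ in the plane of symmetric traceless tangent tensors, whereas the first fundamental form is unaltered ($\C=\proj$, and $K^\ast=K$ by Proposition~\ref{prop:isometry_a_3}). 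These are precisely the first and second fundamental forms of the $\psi$-member of the associate (Bonnet) family of the minimal surface $\surface$; since both pairs of forms satisfy the Gauss--Codazzi equations, the fundamental theorem of surface theory identifies $\surface^\ast$ with that member up to a rigid motion, so $\y$ reduces to the Bonnet transformation of parameter $\psi$. That, conversely, these transformations are isometric pure drillings is already known from \cite{sonnet:neutral}.

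The main obstacle is the explicit computation behind the displayed formula for $\av_1,\av_2,\av_3$: extracting the connectors of an arbitrary pure drilling from $\H=\R\trans\nablas\R$ and, above all, recognising that $\av_3=\nablas\psi$ --- this is what converts the isometry hypothesis into ``$\psi$ is constant'' and makes the rest follow at once. The remaining care is routine: cleanly disposing of the degenerate values $\psi\in\{0,\pi\}$, and quoting the fundamental theorem of surface theory in a form that covers the associate family.
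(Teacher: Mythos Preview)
Your argument is correct and follows essentially the same route as the paper: compute the $\av$ vectors of $\H$ for a rotation $\R\in\mathsf{SO}(\normal)$, use $\av_3=\zero$ (isometry) to force the drilling angle constant, and then read off $2H\sin\psi=0$ from the integrability relation \eqref{eq:conformal_a_1_a_2}. The only cosmetic difference is that the paper works in the principal-curvature frame (so $\dv_1=-\kappa_1\e_1$, $\dv_2=-\kappa_2\e_2$ and the $\av$'s are expressed directly in $\kappa_1,\kappa_2$), whereas you keep generic curvature connectors; your formulas specialize to the paper's. Two small points worth noting: for an isometry the stretching frame $\frameu$ is not uniquely determined, so it is cleaner to say ``choose any orthonormal tangent frame'' (or, as the paper does, the curvature eigenframe); and your disposal of $\psi=\pi$ is slightly sharper than the paper's (which only mentions $\alpha=0$), though the map $\x\mapsto c-\x$ you obtain is an improper isometry of $\euclid$, so calling it a ``rigid motion'' is a mild abuse---your substantive point, that it stores no energy and is naturally excluded, is fine. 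Your explicit identification of $\surface^\ast$ with the $\psi$-member of the associate family via the fundamental theorem of surface theory goes a step beyond the paper, which simply cites \cite{sonnet:neutral} for that conclusion.
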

\begin{proof}
	We choose on $\surface$ the moving frame $\frameen$, where $(\e_1,\e_2)$ are the principal directions of curvature, so that \eqref{eq:gliding_laws} can be written with the curvature connectors $\dv_1$ and $\dv_2$ as in \eqref{eq:d_connectors_diagonal},
	\begin{equation}
		\label{eq:gliding_laws_curvature}
		\begin{cases}
			\nablas\e_1=\e_2\otimes\cv-\kappa_1\normal\otimes\e_1,\\
			\nablas\e_2=-\e_1\otimes\cv-\kappa_2\normal\otimes\e_2,\\
			\nablas\normal=\kappa_1\e_1\otimes\e_1+\kappa_2\e_2\otimes\e_2.
		\end{cases}
	\end{equation}
	
	An isometric deformation $\y$ of $\surface$ must obey \eqref{eq:isometry_deformation_gradient_a}, where $\R\in\mathsf{SO}(\normal)$ is a rotation about $\normal$, represented as
	\begin{equation}
		\label{eq:R_drilling_representation}
		\R=\cos\alpha(\e_1\otimes\e_1+\e_2\otimes\e_2)+\sin\alpha(\e_2\otimes\e_1-\e_1\otimes\e_2)+\normal\otimes\normal,
	\end{equation}
	with $\alpha$ a field to be determined to grant integrability to \eqref{eq:isometry_deformation_gradient_a}.\footnote{As made clear by \eqref{eq:R_drilling_representation}, in general $(\R\e_1,\R\e_2)$ are \emph{not} principal directions of curvature of $\surface$, although $(\R\e_1,\R\e_2,\normal)$ is a legitimate moving frame.} A tedious, but simple calculation elaborating on \eqref{eq:R_drilling_representation} and \eqref{eq:gliding_laws_curvature} gives the following formula for the invariant rotation gradient associated with $\R$,
	\begin{equation}
		\label{eq:H_drilling}
		\H=\W(\e_1)\otimes\av_1+\W(\e_2)\otimes\av_2+\W(\normal)\otimes\av_3,
	\end{equation}
	where
	\begin{equation}
		\label{eq:H_drilling_a}
		\av_1=\kappa_1\sin\alpha\e_1+\kappa_2(1-\cos\alpha)\e_2,\ \av_2=\kappa_1(\cos\alpha-1)\e_1+\kappa_2\sin\alpha\e_2,\ \av_3=\nablas\alpha.
	\end{equation}
	Thus, it follows from \eqref{eq:isometry_a_3}, \eqref{eq:conformal_a_1_a_2}, and \eqref{eq:mean_curvature} that $\alpha$ must be constant on $\surface$ and
	\begin{equation}
		\label{eq:drilling_integrability}
		\sin\alpha(\kappa_1+\kappa_2)=2H\sin\alpha =0,
	\end{equation}
	so that, unless $\alpha=0$ (in which case $\y$ reduces to the identity), $\surface$ must necessarily be a minimal surface.
	
	The proof is completed by the analysis of isometries of minimal surfaces conducted in \cite{sonnet:neutral} (see also \cite[p.\,159]{lawson:some}).
\end{proof}
\begin{remark}
	\label{rmk:H_star}
	By use of the first two equations in \eqref{eq:H_drilling_a} in \eqref{eq:conformal_mean_curvature} with $\lambda=1$, we readily obtain that
	\begin{equation}
		\label{eq:H_star}
		H^\ast=H\cos\alpha=0,
	\end{equation}
	in accord with the fact that the Bonnet transformation changes a minimal surface into another.
\end{remark}
\begin{remark}
	\label{rmk:w_b}
	It only takes a simple computation based on \eqref{eq:H_drilling_a} and \eqref{eq:d_connectors_diagonal} to check that $\wbe$ in \eqref{eq:wd_wb_a} vanishes identically for a pure drilling isometry. Thus, a whole family of non-trivial isometric deformations of $\surface$ is generated by varying the constant $\alpha$. For them all energy modes in \eqref{eq:w_s} and \eqref{eq:w_d_w_b} vanish; they indeed constitute an example of soft elasticity.
\end{remark}
\begin{remark}
	\label{rmk:conformal_extension}
	It follows from equations \eqref{eq:conformal_a}, \eqref{eq:conformal_a_1_a_2}, and \eqref{eq:conformal_mean_curvature} that both \eqref{eq:drilling_integrability} and the conclusions reached in Remarks~\ref{rmk:H_star} and \ref{rmk:w_b} remain valid for  \emph{uniformly} conformal deformations (that is, with constant $\lambda$, albeit different from unity). However, they are not a source of soft elasticity because $\wst\neq0$.
\end{remark}
\begin{remark}
	\label{rmk:bonnet}
	Pure drilling isometries are special non-trivial isometries that preserve the mean curvature of $\surface$. The surfaces for which such isometries exist are called the \emph{Bonnet surfaces}, as Bonnet first studied them (see \cite[pp.\,72-92]{bonnet:memoire} ).\footnote{Since all isometries preserve the Gaussian curvature, the Bonnet surfaces can equivalently be defined as the surfaces that admit isometries preserving both principal curvatures.} He proved that analytic surfaces with no umbilics\footnote{These are points with equal principal curvatures.} and \emph{constant} mean curvature are Bonnet surfaces.
	Further studies \cite{cartan:couples,chern:deformation,chern:moving} have been devoted to the characterization of these surfaces using the method of moving frames. In particular, new classes of Bonnet surfaces have been identified under the assumption that they contain no umbilics and are at least of class $C^5$ \cite{chern:deformation}. Moreover, it was proved in \cite{bracken:helicoidal} that \emph{helicoidal} surfaces\footnote{One such surface is produced by a helicoidal motion in three-dimensional space of an appropriate curve.} are Bonnet surfaces.
\end{remark}

\subsection{Pure bending}\label{sec:bending}
Another, complementary way to restrict an isometric deformation of a material surface $\surface$ is by requiring it to be a pure bending (in some frame). This is achieved, in particular, by replacing $\R$ in \eqref{eq:R_drilling_representation} with
\begin{equation}
	\label{eq:R_bending_representation}
	\R=\e_1\otimes\e_1+\cos\alpha(\e_2\otimes\e_2+\normal\otimes\normal)+\sin\alpha(\normal\otimes\e_2-\e_2\otimes\normal),
\end{equation}
which describes a rotation by angle $\alpha$ about $\e_1$.

We shall identify pure bending isometries in a special class of deformations, which are defined as follows.
\begin{definition}
	\label{def:eversion}
	We say that an isometric deformation $\y$ of $\surface$ is an \emph{eversion} if
	\begin{equation}
		\label{eq:eversion_definition}
		\nablast\normal^\ast(\y(x))=-\R(\x)\nablas\normal(\x)\R(\x)\trans\quad\forall\ \x\in\surface,
	\end{equation}
	where $\R$ is the polar rotation of $\y$.
\end{definition}
\begin{proposition}\label{prop:eversion}
	A pure bending isometry  of $\surface$  with polar rotation as in \eqref{eq:R_bending_representation}, where $\e_1$ is a  principal direction of curvature, is an \emph{eversion} if an only if the following conditions hold,
	\begin{subequations}
		\label{eq:eversion_conditions}
		\begin{align}
			\nablas\kappa_1\cdot\e_2&=\kappa_1(\kappa_2-\kappa_1)\frac{\sin\alpha}{1-\cos\alpha},\label{eq:eversion_condition_a}\\
			\nablas\kappa_2\cdot\e_1&=0,\label{eq:eversion_condition_b}\\
		\nablas\alpha&=2\kappa_2\e_2.\label{eq:eversion_condition_c},
		\end{align}
	\end{subequations}
	where $\kappa_1$ and $\kappa_2$ are the principal curvatures of $\surface$.
\end{proposition}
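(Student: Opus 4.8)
The plan is to mimic the proof of Proposition~\ref{prop:minimal_surfaces}, replacing the drilling rotation \eqref{eq:R_drilling_representation} by the bending rotation \eqref{eq:R_bending_representation}. I would work throughout in the curvature eigenframe $\frameen$, which is a legitimate frame of principal directions of stretching since $\U=\proj$ for an isometry, so that the gliding laws take the form \eqref{eq:gliding_laws_curvature} and the Codazzi--Mainardi relations are available in the sharp form \eqref{eq:mainardi_special}. The first -- and computationally heaviest -- step is to evaluate the invariant rotation gradient $\H=\R\trans\nablas\R$ for $\R$ as in \eqref{eq:R_bending_representation}: writing $\R$ through the Rodrigues representation about $\e_1$, differentiating with the aid of \eqref{eq:gliding_laws_curvature}, and extracting the $\av$ vectors from $\H\circ\normal=-\e_2\otimes\av_1+\e_1\otimes\av_2$ together with $\av_3=(\H\circ\e_1)\trans\e_2$ (cf.\ the identities recorded just after \eqref{eq:a_2}), I expect
\begin{equation*}
\av_1=\nablas\alpha,\qquad \av_2=\sin\alpha\,\cv-\kappa_1(1-\cos\alpha)\e_1,\qquad \av_3=-(1-\cos\alpha)\cv-\kappa_1\sin\alpha\,\e_1,
\end{equation*}
the exact analogue of \eqref{eq:H_drilling}--\eqref{eq:H_drilling_a}.

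Next I would translate the eversion condition \eqref{eq:eversion_definition} into conditions on the $\av$ vectors. Since $\normal^\ast=\R\normal$, differentiation gives $\nablas\normal^\ast=\nablas\R\circ\normal+\R\nablas\normal=\R(\H\circ\normal)+\R\nablas\normal$; combining this with the chain rule $\nablas\normal^\ast=(\nablast\normal^\ast)\nablas\y$, with $\nablas\y=\R\proj$, and with $\R\proj\R\trans=\mathbf{P}(\normal^\ast)$, the requirement $\nablast\normal^\ast=-\R\nablas\normal\R\trans$ collapses to $\H\circ\normal=-2\nablas\normal$. Reading off components via $\H\circ\normal=-\e_2\otimes\av_1+\e_1\otimes\av_2$ and $\nablas\normal=\kappa_1\e_1\otimes\e_1+\kappa_2\e_2\otimes\e_2$, this is equivalent to the pair $\av_1=2\kappa_2\e_2$ and $\av_2=-2\kappa_1\e_1$.

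It then remains to combine the two ingredients. For any isometric deformation Proposition~\ref{prop:isometry_a_3} gives $\av_3=\zero$, which by the formula above forces $\cv=-\dfrac{\kappa_1\sin\alpha}{1-\cos\alpha}\,\e_1$ (here $\alpha\not\equiv 0$, so $1-\cos\alpha\neq0$); in particular $\cv\cdot\e_2=0$ and $\cv\cdot\e_1=-\kappa_1\sin\alpha/(1-\cos\alpha)$. Substituting these into the Codazzi--Mainardi equations \eqref{eq:mainardi_special_1} and \eqref{eq:mainardi_special_2}, and using the absence of umbilics ($\kappa_1\neq\kappa_2$), produces precisely \eqref{eq:eversion_condition_a} and \eqref{eq:eversion_condition_b}. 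With this value of $\cv$ the elementary identity $\sin^2\alpha/(1-\cos\alpha)+(1-\cos\alpha)=2$ shows that $\av_2=-2\kappa_1\e_1$ holds automatically, so the eversion requirement of the previous paragraph reduces to the single equation $\av_1=\nablas\alpha=2\kappa_2\e_2$, which is \eqref{eq:eversion_condition_c} (and which, consistently, makes the remaining integrability condition \eqref{eq:integrability_condition_1} hold). For the converse, given \eqref{eq:eversion_conditions} one reads \eqref{eq:mainardi_special} backwards to recover the value of $\cv$, hence $\av_3=\zero$ and $\av_2=-2\kappa_1\e_1$, while \eqref{eq:eversion_condition_c} says $\av_1=2\kappa_2\e_2$; together these give $\H\circ\normal=-2\nablas\normal$, i.e.\ the eversion identity.

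The principal obstacle is the explicit evaluation of $\H$ in the first paragraph: the bookkeeping of the $\cv$-, $\nablas\alpha$-, and trigonometric contributions generated when differentiating the Rodrigues form of $\R$. The rest is algebra, but one should not lose sight of the case distinctions that the statement quietly presupposes -- $\alpha$ not identically zero (needed to divide by $1-\cos\alpha$) and $\surface$ free of umbilics (needed to pass between the components of $\cv$ and $\nablas\kappa_i$ through \eqref{eq:mainardi_special}) -- the umbilic locus having to be handled, or excluded, separately, as is customary for Bonnet-type questions.
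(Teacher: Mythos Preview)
Your proposal is correct and tracks the paper's proof closely: both compute the $\av$ vectors of $\H$ for the bending rotation \eqref{eq:R_bending_representation} in the curvature eigenframe, use $\av_3=\zero$ (isometry) to pin down $\cv=-\dfrac{\kappa_1\sin\alpha}{1-\cos\alpha}\e_1$, and insert this into \eqref{eq:mainardi_special} to obtain \eqref{eq:eversion_condition_a}--\eqref{eq:eversion_condition_b}. The one genuine difference is in how the eversion condition is tied to \eqref{eq:eversion_condition_c}: you recast \eqref{eq:eversion_definition} directly as $\H\circ\normal=-2\nablas\normal$ and read off $\av_1=2\kappa_2\e_2$, $\av_2=-2\kappa_1\e_1$, whereas the paper computes the deformed curvature connectors $\dv_i^\ast$ from \eqref{eq:conformal_a_2}--\eqref{eq:conformal_a_1}, extracts $\kappa_1^\ast=-\kappa_1$ and $\kappa_2^\ast=\kappa_2-\nablas\alpha\cdot\e_2$, and then imposes $\kappa_2^\ast=-\kappa_2$. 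Your shortcut is a touch cleaner; the paper's route has the minor bonus of displaying the deformed principal curvatures explicitly along the way. One small over-argument in your converse: since a pure bending isometry is already assumed, $\av_3=\zero$ fixes $\cv$ (and hence $\av_2=-2\kappa_1\e_1$) directly, so there is no need to ``read \eqref{eq:mainardi_special} backwards'' and no umbilic hypothesis is required for that step.
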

\begin{proof}
A direct computation resorting to \eqref{eq:gliding_laws_curvature} delivers $\H$ in the form \eqref{eq:H_drilling}, where now
\begin{subequations}
	\label{eq:bending_a}
	\begin{align}
		\av_1&=\nablas\alpha,\label{eq:bending_a_1}\\
		\av_2&=\sin\alpha\cv+\kappa_1(\cos\alpha-1)\e_1,\label{eq:bending_a_2}\\
		\av_3&=(\cos\alpha-1)\cv-\kappa_1\sin\alpha\e_1.\label{eq:bending_a_3}
	\end{align}
\end{subequations}
Since $\av_3$ must vanish for an isometry, we derive from \eqref{eq:bending_a_3} that
\begin{equation}
	\label{eq:bending_c}
	\cv=-\frac{\kappa_1\sin\alpha}{1-\cos\alpha}\e_1.
\end{equation}
It then follows from \eqref{eq:bending_c} and \eqref{eq:bending_a_2} that
\begin{equation}
	\label{eq:bending_a_2_explicit}
	\av_2=-2\kappa_1\e_1,
\end{equation}
which combined with \eqref{eq:bending_a_1} shows that the integrability condition \eqref{eq:conformal_a_1_a_2} reduces to
\begin{equation}
	\label{eq:bending_integrability}
	\nablas\alpha\cdot\e_1=0.
\end{equation}
Equations \eqref{eq:eversion_condition_a} and \eqref{eq:eversion_condition_b} follow from inserting \eqref{eq:bending_c} into equations \eqref{eq:mainardi_special}.

We now use equations \eqref{eq:conformal_a_2} and \eqref{eq:conformal_a_1} combined with \eqref{eq:bending_a_1} and \eqref{eq:bending_a_2_explicit} to obtain
\begin{equation}
	\label{eq:eversion_d_ast}
	\dv_1^\ast=\kappa_1\R\e_1=\kappa_1\e_1\quad\text{and}\quad \dv_2^\ast=((\nablas\alpha\cdot\e_2)-\kappa_2)\R\e_2,
\end{equation}
which lead us to the following expressions for the principal curvatures of $\surface^\ast$,
\begin{equation}
	\label{eq:eversion_kappa_ast}
	\kappa_1^\ast=-\kappa_1\quad\text{and}\quad\kappa_2^\ast=\kappa_2-\nablas\alpha\cdot\e_2.
\end{equation}
Thus, \eqref{eq:eversion_definition} is obeyed if and only if \eqref{eq:eversion_condition_c} is.
\end{proof}
At first sight conditions \eqref{eq:eversion_conditions} may appear to be too many to be compatible; we shall now see that this is not the case.
\begin{proposition}
	\label{prop:revolution}
	All surfaces of revolution admit a pure bending eversion.
\end{proposition}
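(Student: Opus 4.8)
The plan is to construct the eversion explicitly and then invoke Proposition~\ref{prop:eversion}. First I would parametrize the surface of revolution as $\rv(s,v)=(\rho(s)\cos v,\rho(s)\sin v,z(s))$ with the profile in arclength gauge, $(\rho')^2+(z')^2=1$, $\rho>0$, and introduce the turning angle $\theta(s)$ by $\rho'=\cos\theta$, $z'=\sin\theta$. I take the moving frame $\frameen$ with $\e_1$ the unit azimuthal vector (tangent to the parallels), $\e_2=\rv_s$ the unit meridian tangent, and $\normal=\e_1\times\e_2$. Computing $\nablas\e_1,\nablas\e_2,\nablas\normal$ from the gliding laws \eqref{eq:gliding_laws} one finds this is the eigenframe of $\nablas\normal$, with principal curvatures $\kappa_1=\sin\theta/\rho$ along $\e_1$ and $\kappa_2=\theta'$ along $\e_2$, curvature connectors $\dv_1=-\kappa_1\e_1$, $\dv_2=-\kappa_2\e_2$ as in \eqref{eq:d_connectors_diagonal}, and spin connector $\cv=-(\cos\theta/\rho)\e_1$ — in particular $\cv$ is parallel to $\e_1$, which is the first place the rotational symmetry enters. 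I would also record the identity $\kappa_1'=\tfrac{\rho'}{\rho}(\kappa_2-\kappa_1)$, equivalently $(\rho\kappa_1)'=\rho'\kappa_2$, which is just the Codazzi equation \eqref{eq:mainardi_special_1} in this frame.

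Step one is to show the surface admits a pure bending isometry of the form \eqref{eq:R_bending_representation} (with this $\e_1$) and to pin down $\alpha$. Following the computation in the proof of Proposition~\ref{prop:eversion}, the condition $\av_3=\zero$ mandatory for an isometry is \eqref{eq:bending_c}, $\cv=-\dfrac{\kappa_1\sin\alpha}{1-\cos\alpha}\e_1$; since here $\cv=-(\cos\theta/\rho)\e_1$ and $\kappa_1=\sin\theta/\rho$, this reduces to $\dfrac{\sin\alpha}{1-\cos\alpha}=\cot\theta$, solved modulo $2\pi$ by $\alpha=2\theta$ on account of the half-angle identity $\dfrac{\sin2\theta}{1-\cos2\theta}=\cot\theta$. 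With $\alpha=2\theta(s)$ depending on the meridian coordinate alone, $\nablas\alpha=2\theta'\e_2$ is orthogonal to $\e_1$ (the first fundamental form being $ds^2+\rho^2\,dv^2$), so the remaining integrability condition \eqref{eq:bending_integrability} holds automatically; hence $\nablas\y=\R\proj$ is integrable and a pure bending isometry $\y$ exists (at least locally, and globally when the surface is simply connected).

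Step two is to verify the three eversion conditions \eqref{eq:eversion_conditions} for $\alpha=2\theta$. Condition \eqref{eq:eversion_condition_b} is immediate: $\kappa_2=\theta'$ is a function of $s$ only, so $\nablas\kappa_2\parallel\e_2\perp\e_1$. Condition \eqref{eq:eversion_condition_c} reads $\nablas\alpha=2\kappa_2\e_2$, which holds since $\nablas\alpha=2\theta'\e_2$ and $\kappa_2=\theta'$. For \eqref{eq:eversion_condition_a}, the left-hand side is $\nablas\kappa_1\cdot\e_2=\kappa_1'$ and the right-hand side is $\kappa_1(\kappa_2-\kappa_1)\cot\theta=\tfrac{\rho'}{\rho}(\kappa_2-\kappa_1)$ after writing $\cot\theta=\rho'/(\rho\kappa_1)$, and these agree by the Codazzi identity recorded above. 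By Proposition~\ref{prop:eversion}, $\y$ is therefore an eversion, which proves the claim.

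Essentially everything is a substitution; the only real content is that the three relations \eqref{eq:eversion_conditions}, which look overdetermined for the single unknown $\alpha$, are automatically consistent for a surface of revolution — the value $\alpha=2\theta$, already forced by the pure-bending-isometry requirement, satisfies the Codazzi-type constraint \eqref{eq:eversion_condition_a} identically, precisely because of the symmetry relation $(\rho\kappa_1)'=\rho'\kappa_2$. The one point needing care is the degenerate locus: one works on the regular part of the surface, away from the axis $\rho=0$; at the isolated parallels where $\theta\in\pi\mathbb{Z}$ the rotation $\R$ reduces to $\I$, but this causes no inconsistency, since \eqref{eq:eversion_kappa_ast} still yields $\kappa_i^\ast=-\kappa_i$ there and the eversion identity \eqref{eq:eversion_definition} persists by continuity.
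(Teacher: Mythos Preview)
Your proof is correct and follows essentially the same route as the paper—verify the conditions of Proposition~\ref{prop:eversion} for a surface of revolution—the difference being your arclength/turning-angle parametrization, which yields the cleaner $\alpha=2\theta$ in place of the paper's equivalent $\alpha=\arctan(2\rho',\rho'^2-1)$ (in the graph gauge $\rho=\rho(z)$) and makes transparent that \eqref{eq:eversion_condition_a} is nothing but the Codazzi equation \eqref{eq:mainardi_special_1}. One thing the paper supplies that you leave implicit is the explicit global eversion map $\y(\x)=\rho\,\e_r-z\,\e_z$ (Appendix~\ref{sec:eversion}), which disposes of the global-integrability caveat you raise for the non-simply-connected case.
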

\begin{proof}
	In the frame $\framecyl$ of ordinary cylindrical coordinates $(r,\vt,z)$, we represent a surface of revolution about $\e_z$ as
	\begin{equation}
		\label{eq:revolution_surface}
		\x(\vt,z)=\rho(z)\e_r+z\e_z,\quad\vt\in[0,2\pi),\ z\geqq0,
	\end{equation}
	where $\rho$ is a smooth positive function, with $\rho(z)$ representing the radius of the circular section of $\surface$ at  height $z$. The parallels of $\surface$ are oriented along $\e_1=\e_\vt$ and its meridians along
	\begin{equation*}
		\label{eq:revolution_e_2}
		\e_2=\frac{\rho'\e_r+\e_z}{\sqrt{1+\rho'^2}},
	\end{equation*}
	while the unit normal $\normal$ is given by
	\begin{equation}
		\label{eq:revolution_nu}
		\nu=\e_1\times\e_2=\frac{\e_r-\rho'\e_z}{\sqrt{1+\rho'^2}}.
	\end{equation}
The principal curvatures associated with the principal directions $(\e_1,\e_2)$ are
\begin{equation}
	\label{eq:revolution_kappa}
	\kappa_1=\frac{1}{\rho\sqrt{1+\rho'^2}}\quad\text{and}\quad\kappa_2=-\frac{\rho''}{(1+\rho'^2)^{3/2}}.
\end{equation}

Equation \eqref{eq:eversion_condition_b} is thus automatically satisfied. To satisfy \eqref{eq:eversion_condition_c}, we take $\alpha=\alpha(z)$ and compute
\begin{equation}\label{eq:revolution_nabla_alpha}
	\nablas\alpha=\frac{\alpha'}{\sqrt{1+\rho'^2}}\e_2.
\end{equation}
Similarly, we obtain $\nablas\kappa_2\cdot\e_2$, so that equations \eqref{eq:eversion_condition_a} and \eqref{eq:eversion_condition_c} become
\begin{equation}
	\label{eq:revolution_system}
	\begin{cases}
		\displaystyle\rho'=\frac{\sin\alpha}{1-\cos\alpha},\\\\
	    \displaystyle\alpha'=-\frac{2\rho''}{1+\rho'^2}.
	\end{cases}	
\end{equation}
It is straightforward to check that for any function $\rho$ this system is solved by\footnote{The two-argument function $\arctan(y,x)$, ranging in the interval $[-\pi,\pi]$, extends the standard function $\arctan(y/x)$ ranging in $[-\pi/2,\pi/2]$; it attributes to the correct quadrant of the Cartesian plane the angle subtended by the positive $x$-axis and the radial line through the origin and the point $(x,y)$.}
\begin{equation}
	\label{eq:revolution_alpha_solution}
	\alpha=\arctan(2\rho',\rho'^2-1).
	%\arctan\frac{2\rho'}{\rho'^2-1}\,.
\end{equation}
In Appendix~\ref{sec:eversion}, we exhibit the deformation $\y$ corresponding to \eqref{eq:revolution_alpha_solution} together with other computational details.
\end{proof}
Figure~\ref{fig:eversion} shows the eversion of half a catenoid represented by \eqref{eq:revolution_surface} with $\rho(z)=\cosh z$ for $0\leqq z\leqq2$.
\begin{figure}[] 
	\centering
	\begin{subfigure}[b]{0.4\linewidth}
		\centering
		\includegraphics[width=\linewidth]{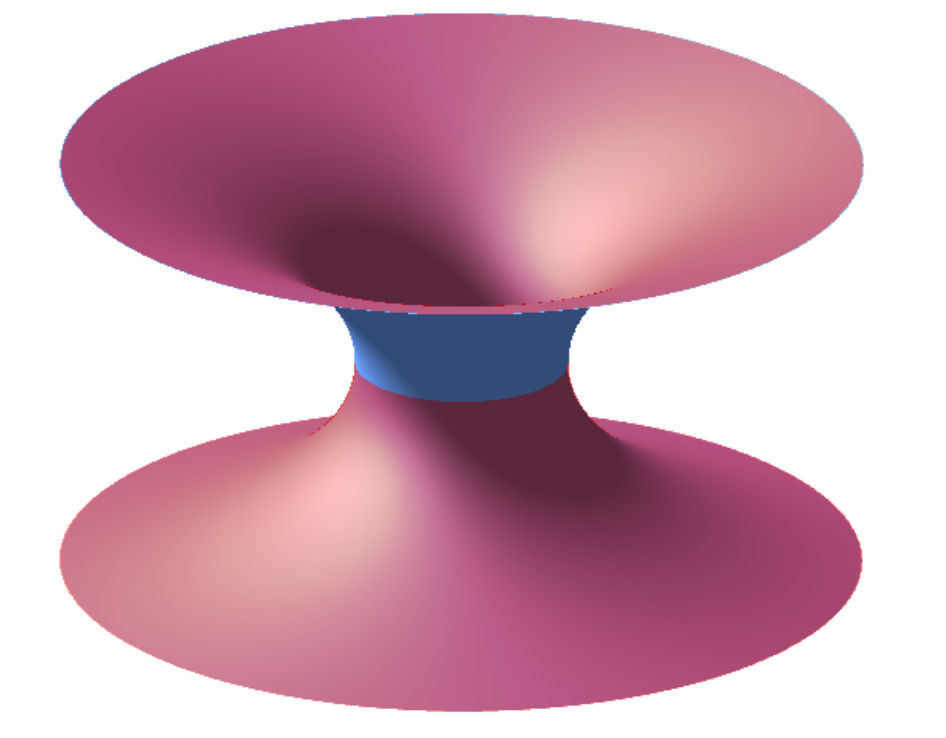}
		\caption{Top view.}
		\label{fig:eversion_a}
	\end{subfigure}
	\qquad
	\begin{subfigure}[b]{0.4\linewidth}
		\centering
		\includegraphics[width=\linewidth]{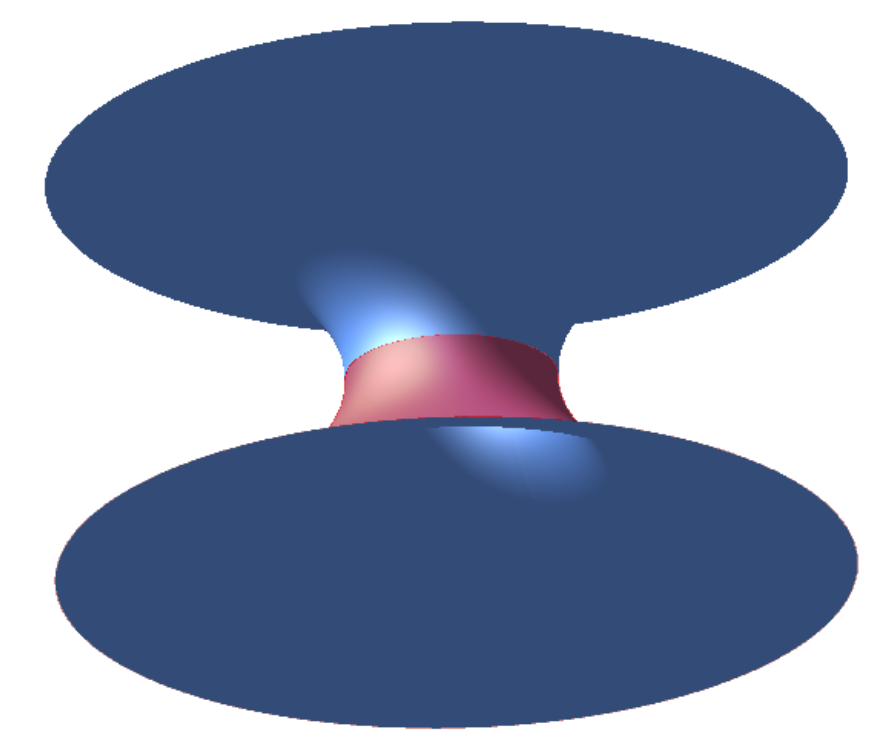}
		\caption{Bottom view.}
		\label{fig:eversion_b}
	\end{subfigure}
	\caption{Two views of the eversion of half a catenoid based on a unit circle and extending for $0\leqq z\leqq2$. The surface is everted inside out as suggested by the different colours of the exposed side.}
	\label{fig:eversion}
\end{figure}
\begin{remark}
	\label{rmk:contrast}
	It is instructive to contrast Proposition~\ref{prop:curvature_frame_indifference} with Proposition~\ref{prop:eversion}: what is a pure rotation in the former becomes an eversion in the latter, while only a sign makes \eqref{eq:curvature_frame_indifference} differ from \eqref{eq:eversion_definition}.
\end{remark}
\begin{remark}
	\label{rmk:eversion_w_b}
	Since for an isometry $\V=\mathbf{P}(\normal^\ast)$, it follows from \eqref{eq:wb_connectors} and \eqref{eq:eversion_definition} that $\wbe=0$ for a pure bending eversion. However, despite the fact that both $\wst$ and $\wdr$ also vanish, an eversion is \emph{not} an instance of soft elasticity (see Remark~\ref{rmk:w_b}), as the everted shape cannot be attained through a family of energy-free deformations; quite on the contrary, it is \emph{energetically isolated}: a surface must in general go through a great deal of stretching (and possibly drilling too) to get everted.
\end{remark}
\begin{remark}
	\label{rmk:truesdell}
	Elastic eversion was perhaps first studied (both theoretically and ``experimentally'') by Truesdell (see, in particular pp.\,510-519 of \cite{truesdell:some}). In more recent times, the eversion of shells has again become quite a popular topic (see, for example, \cite{krieger:extreme,reis:perspective,taffetani:static,liu:snap,taffetani:nonlinear}) also involving some exotic applications that exploit the bistability of everted shapes \cite{kwak:edible}.
\end{remark}

\section{Conclusions}\label{sec:conclusions}
In deforming a material surface in three-dimensional space, angles and areas will generally be altered. When they are not, the deformation is an isometry and this metric restriction, which is geometrically intrinsic in nature, has notable implications, the most  known of which is perhaps Gauss' \emph{theorema egregium}, which states that the Gaussian curvature of the surface remains unchanged under the deformation. We have attempted to explore other, possibly more exotic implications of metric restrictions imposed on a deformation of a material surface.

As our mathematical language betrays, we have been interested in kinematics, seen as the geometric vestments of mechanics: in this view, a material surface cannot be identified with its fundamental forms; it is rather a coherent collection of individual body-points that move in space and cannot interpenetrate. Such a conception of material surfaces poses a number of restrictions to its geometric manifestations. We have \emph{not} used the classical method of coordinates, although these are unavoidable to accomplish certain computational tasks.\footnote{In this respect, our style is reminiscent of that first fashioned in \cite{gurtin:continuum,gurtin:addenda}, of which \cite{seguin:coordinate} represents perhaps the most recent emanation.} We have embraced the method of moving frames, but in a vectorial variant, which avoids resorting to differential forms, and is thus possibly more germane to the taste and skills of the engineering community.

Although some preliminaries on surface calculus were needed to make the reader acquainted with our
%(not completely conventional)
approach, our endeavour was not only limited to reobtaining in a different way results that have already been known for quite some time. A method is best tested by new applications. We found these in a theory for soft thin shells that we have recently proposed. This is a theory based on the energetic separation of three independent deformation modes: stretching, drilling, and bending. We explored the consequences that some metric restrictions of the deformation have on the three independent energetic contents.

Here is a list of our main conclusions:
\begin{enumerate}[(1)]
	\item An isometric deformation, which has no stretching content, has no drilling content either.
	\item An isometric deformation of any part of a sphere onto another is a uniform rotation in the ambient space.
	\item An isometric deformation of a surface $\surface$ has no bending elastic energy (besides having no drilling elastic energy) only if $\surface$ is a minimal surface and the deformation is a Bonnet transformation.
	\item All surfaces of revolution admit a pure bending eversion with no elastic energy.
\end{enumerate} 

We have given more general conditions for the existence of pure bending eversions; they might be worth exploring further for classes of shell shapes also lacking any particular symmetry. It is our hope that such a study could be conducted in the future; it might shed some new light on the snapping of soft shells between bistable configurations.

%%%%%%%%%%%%%%%%%%%%%

\backmatter

%\bmhead{Acknowledgments}
%Acknowledgments are not compulsory. Where included they should be brief. Grant or contribution numbers may be acknowledged.

\begin{appendices}
	
	\section{Metric curvature formula}\label{sec:metric}
	In this appendix, we derive from the Gauss equation \eqref{eq:gauss_equation} an elegant formula for the Gaussian curvature $K$, which is called \emph{metric} as it expresses $K$ solely in terms of the metric elements $|\rvu|$ and $|\rvv|$ defined in Sect.~\ref{sec:prelim} for a generic system of coordinates. Here, we shall assume that $(u,v)$ coordinates are merely orthogonal, and $|\rvu|$ and $|\rvv|$ may differ.
	
	We start by recalling how the surface gradient $\nablas f$ of a smooth, scalar-valued function $f$ defined on $\surface$ is related to the partial derivatives $\partial_uf$ and $\partial_vf$ of $f$ expressed in the $(u,v)$ coordinates. For a generic curve $t\mapsto\rv(t)$, parameterized as $t\mapsto(u(u),v(t))$,
	\begin{equation}
		\label{eq:f_dot}
		\dot{f}(\rv(t))=\nablas f\cdot\dot{\rv}=\nablas f\cdot(\dot{u}|\rvu|\e_u+\dot{v}|\rvv|\e_v),
	\end{equation}
	where a superimposed dot denotes differentiation with respect to $t$ and use has been made of \eqref{eq:e_u_e_v_definition}. Since, for  $f$ expressed in $(u,v)$ coordinates, we also have that $\dot{f}=(\partial_uf)\dot{u}+(\partial_vf)\dot{v}$, and the curve along which the differentiation is performed is arbitrary, we easily derive from \eqref{eq:f_dot} that 
	\begin{equation}
		\label{eq:nablas_f}
		\nablas f=\frac{\partial_uf}{|\rvu|}\e_u+\frac{\partial_vf}{|\rvv|}\e_v.
	\end{equation}
	
	Letting $(\cv,\dv_u,\dv_v)$ denote the connectors of the moving frame $\framee$ associated with orthogonal coordinates $(u,v)$, we wish now to represent $\cv$ in the basis $(\e_u,\e_v)$. To this end, since by \eqref{eq:gliding_laws} $\cv=(\nablas\e_u)\trans\e_v$, with the aid of \eqref{eq:nablas_f} we first compute 
	\begin{equation}\label{eq:nablas_e_u}
		\nablas\e_u=-\frac{1}{|\rvu|}\e_u\otimes\nablas|\rvu|+\frac{1}{|\rvu|}\left(\frac{1}{|\rvu|}\partial_{uu}\rvu\otimes\e_u+\frac{1}{|\rvv|}\partial_{vu}\rvu\otimes\e_v\right),
	\end{equation}
	from which we obtain
	\begin{equation}\label{eq:c}
		\cv=\frac{1}{|\rvu|^2|\rvv|}(\partial_{uu}\rvu\cdot\partial_v\rv)\e_u+\frac{1}{|\rvv|^2|\rvu|}(\partial_{vu}\rv\cdot\partial_v\rv)\e_v.	
	\end{equation}
	Since $\partial_u\rv\cdot\partial_v\rv=0$ and $\partial_{uv}\rv=\partial_{vu}\rv$, again by use of \eqref{eq:e_u_e_v_definition}, \eqref{eq:c} can be given a more symmetric form,
	\begin{equation}
		\label{eq:c_symmetric}
		\cv=\frac{1}{|\rvu||\rvv|}(\e_v\otimes\e_v-\e_u\otimes\e_u)\partial_{uv}\rv.
	\end{equation}
	
	To express $\cv$ only in terms of derivatives of the metric elements, we remark that 
	\begin{subequations}
		\label{eq:c_remark}
		\begin{align}
			\partial_{uv}\rv=\partial_u(|\rvv|\e_v)&=(\partial_u|\rvv|)\e_v+|\rvv|\partial_u\e_v\\
			&=(\partial_v|\rvu|)\e_u+|\rvu|\partial_v\e_u.
		\end{align}
	\end{subequations}
	Equations \eqref{eq:c_symmetric} and \eqref{eq:c_remark} together with the identities $\e_u\cdot\partial_v\e_u=\e_v\cdot\partial_u\e_v=0$, which follow from both $\e_u$ and $\e_v$ being unit vectors, allow us to decompose $\cv$ as $\cv=c_u\e_u+c_v\e_v$, where
	\begin{equation}
		\label{eq:c_u_c_v}
		c_u=-\frac{1}{|\rvu||\rvv|}\partial_v|\rvu|,\quad c_v=\frac{1}{|\rvu||\rvv|}\partial_u|\rvv|.
	\end{equation}
	
	With this representation for $\cv$ at hand, we can now make use of \eqref{eq:gauss_equation} to obtain the desired formula for $K$. From \eqref{eq:gliding_laws} written for the frame $\framee$ we easily arrive at 
	\begin{equation}
		\label{eq:curls_e_u_e_v}
		\curls\e_u\cdot\normal=c_u\quad\text{and}\quad\curls\e_v\cdot\normal=c_v,
	\end{equation}
	and so
	\begin{align}
		\label{eq:pre_K}
		\curls\cv\cdot\normal&=\curls(c_u\e_u+c_v\e_v)\cdot\normal\nonumber\\
		&=c_u^2+c_v^2+\e_u\cdot\nablas c_v-\e_v\cdot\nablas c_u.
	\end{align}
	By inserting \eqref{eq:c_u_c_v} in \eqref{eq:pre_K}, after some simplifications, we conclude that 
	\begin{equation}
		\label{eq:K_metric_formula}
		K=-\frac{1}{|\rvu||\rvv|}\left[\partial_u\left(\frac{\partial_u|\rvv|}{|\rvu|}\right)+\partial_v\left(\frac{\partial_v|\rvu|}{|\rvv|}\right)\right].
	\end{equation}
	This equation can be found in all elementary textbooks on differential geometry.\footnote{See, for example, \cite[p.\,297]{o'neill:elementary} or \cite[equation (38.35)]{needham:visual}, to cite just two whose development is closer in style to the one presented here.}
	
	We finally note that equation \eqref{eq:pre_K}, written here for the coordinate frame $\framee$, is indeed valid for a generic moving frame $\frameen$, %where $\cv=c_1\e_1+c_2\e_2$,
	as it simply relies on the gliding laws \eqref{eq:gliding_laws} in the main text. Once applied to the moving frame identified by the principal directions of curvature, and combined with \eqref{eq:gauss_equation}, \eqref{eq:pre_K} delivers the following alternative curvature formula,
	\begin{equation}\label{eq:K_alternative}
K=\e_2\cdot\nablas(\cv\cdot\e_1)-\e_1\cdot\nablas(\cv\cdot\e_2)-(\cv\cdot\e_1)^2-(\cv\cdot\e_2)^2,		
\end{equation}
see also (38.36) of \cite{needham:visual}.

\section{Drilling and bending contents}\label{sec:contents}
As shown in \cite{sonnet:variational}, the drilling and bending \emph{contents} of $\y$ are vector fields on $\surface$ given by
\begin{equation}
	\label{eq:contents}
	\dv=a_\nu\normal,\quad\bv=\frac{1}{1+a_\nu^2}(\I+a_\nu\W(\normal)\proj)\av\quad\text{with}\quad a_\nu:=\av\cdot\normal,
\end{equation}
where $\I$ is the identity tensor (in three dimensions), $\proj:=\I-\normal\otimes\normal$ is the projection onto the plane orthogonal to $\normal$ (the tangent plane), $\W(\normal)$ is the skew-symmetric tensor associated with $\normal$,\footnote{For a generic vector $\uv$, the skew-symmetric tensor $\W(\uv)$ associated with $\uv$ is such that $\W(\uv)\vv=\uv\times\vv$, for all vectors $\vv$.} and $\av$ is the vector representing $\R$, which can be extracted from
\begin{equation}
\label{eq:W(a)}
\W(\av)=\frac{\R-\R\trans}{1+\tr\R}.
\end{equation}

Both $\dv$ and $\bv$ are frame-dependent, whereas, as established in \cite{sonnet:variational}, both $\wdr$ and $\wbe$ in \eqref{eq:w_d_w_b} are frame-invariant measures of drilling and bending, respectively.

\section{Integrability condition}\label{sec:integrability}
In this appendix, we complete the proof of equations \eqref{eq:integrability_conditions}. By \eqref{eq:deformation_gradient}, \eqref{eq:H_definition}, and \eqref{eq:H_representation}, we can write
\begin{align}
	\label{eq:second_deformation_gradient}
	\nablastwo\y&=\nablas(\R\U)=\R\W(\uv_1)\U\otimes\av_1+\R\W(\uv_2)\U\otimes\av_2+\R\W(\normal)\U\otimes\av_3\nonumber\\
	&+\R\nablas(\lambda_1\uv_1\otimes\uv_1+\lambda_2\uv_2\otimes\uv_2),
\end{align}
where \eqref{eq:U_V} has also been used. Equations \eqref{eq:gliding_laws} and \eqref{eq:V_v} then allow us to expand \eqref{eq:second_deformation_gradient} in the lengthy, but useful form,
\begin{align}
	\label{eq:second_deformation_gradient_expanded}
	\nablastwo\y&=\vv_1\otimes(-\lambda_2\uv_2\otimes\av_2+\uv_1\otimes\nablas\lambda_1+\lambda_1\uv_2\otimes\cv+\lambda_1\normal\otimes\dv_1-\lambda_2\uv_2\otimes\cv)\nonumber\\
	&+\vv_2\otimes(\lambda_1\uv_1\otimes\av_3+\uv_2\otimes\nablas\lambda_2+\lambda_1\uv_1\otimes\cv+\lambda_2\normal\otimes\dv_2-\lambda_2\uv_1\otimes\cv)\nonumber\\
	&+\normal^\ast\otimes(\lambda_2\uv_2\otimes\av_1-\lambda_1\uv_1\otimes\av_2+\lambda_1\uv_1\otimes\dv_1+\lambda_2\uv_2\otimes\dv_2).
\end{align}
On the other hand,
\begin{equation}
(\nablas\y)\curvature\otimes\normal=-\lambda_1\vv_1\otimes\dv_1\otimes\normal-\lambda_2\vv_2\otimes\dv_2\otimes\normal,
\end{equation}
so that, since $\framev$ is a basis of $\transl$, with little more labour, setting  $\F=\nablas\y$ in \eqref{eq:integrability_vector} delivers \eqref{eq:integrability_conditions}.
	
\section{Eversion of a surface of revolution}\label{sec:eversion}
On a surface of revolution $\surface$ represented as in \eqref{eq:revolution_surface} of the main text, we consider a smooth curve parameterized by $(\vt(t),z(t))$. Differentiating this curve in the parameter $t$,\footnote{Operation denoted by a superimposed dot.} we obtain from \eqref{eq:revolution_surface} that
\begin{equation}
	\label{eq:revolution_x_dot}
	\dot\x=\rho\dot{\vt}\e_1+\sqrt{1+\rho'^2}\dot{z}\e_2.
\end{equation}
Thus, for a scalar field $\alpha$ depending only on $z$,
\begin{equation}
	\label{eq:revolution_alpha_dot}
	\dot\alpha=\alpha'\dot{z}=\nablas\alpha\cdot\dot{\x},
\end{equation}
which together with \eqref{eq:revolution_x_dot} implies \eqref{eq:revolution_nabla_alpha}.

Let now $\y$ be the deformation that maps $\x(\vt,z)$ into
\begin{equation}
	\label{eq:revolution_y}
	\y(\x)=\rho(z)\e_r-z\e_z.
\end{equation}
Its effect on half a catenoid is shown in Fig.~\ref{fig:eversion}. Along the same curve introduced above,
\begin{equation}
	\label{eq:revolution_y_dot}
	\dot\y=\rho\dot{\vt}\e_1\vt+\dot{z}(\rho'\e_r-\e_z),
\end{equation}
which, as
\begin{equation}
	\label{eq:revolution_e_r_e_vt_e_z}
	\e_r=\frac{\rho'\e_2+\normal}{\sqrt{1+\rho'^2}},\quad\e_\vt=\e_1,\quad\e_z=\frac{\e_2-\rho'\normal}{\sqrt{1+\rho'^2}},
\end{equation}
can also be written as 
\begin{equation}
	\label{eq:revolution_y_dot_rewritten}
	\dot\y=\rho\dot{\vt}\e_1+\frac{\dot{z}}{\sqrt{1+\rho'^2}}[(\rho'^2-1)\e_2+2\rho'\normal].
\end{equation}
Comparing \eqref{eq:revolution_y_dot_rewritten} and \eqref{eq:revolution_x_dot}, we obtain that 
\begin{equation}
	\label{eq:revolution_deformation_gradient}
	\nablas\y=\frac{1}{1+\rho'^2}[(\rho'^2-1)\e_2+2\rho'\normal]+\e_1\otimes\e_1,
\end{equation}
which can be given the following form derived from \eqref{eq:R_bending_representation},
\begin{equation}
	\label{eq:revolution_deformation_gradient_equivalent}
	\R\proj=\e_1\otimes\e_1+\cos\alpha\e_2\otimes\e_2+\sin\alpha\normal\otimes\e_2,
\end{equation}
only if
\begin{equation}
	\label{eq:revolution_alpha}
	\sin\alpha=\frac{2\rho'}{1+\rho'^2}\quad\text{and}\quad\cos\alpha=\frac{\rho'^2-1}{1+\rho'^2},
\end{equation}
which is equivalent to \eqref{eq:revolution_alpha_solution}.
\end{appendices}

%\bibliography{elastomers}

\end{document}